\newcommand{\Z}{\mathbb{Z}}
\newcommand{\R}{\mathbb{R}}
\newcommand{\nH}{{H^{\alpha/2}(\R^d)}}
\newtheorem{thm}{Theorem}[section]
\newtheorem*{thm*}{Theorem}
\newtheorem{corollary}[thm]{Corollary}
\newtheorem{assumption}{Assumption}
\newtheorem{lemma}[thm]{Lemma}
\newtheorem{prop}[thm]{Proposition}
\theoremstyle{definition}
\newtheorem{df}[thm]{Definition}
\newtheorem*{claim*}{Claim}
\newtheorem{example}{Example}[section]
\newtheorem{rem}[thm]{Remark}
\numberwithin{equation}{section}
\newtheorem*{acknowledgement}{Acknowledgement}
\DeclareMathOperator*{\esup}{ess\,sup}
\begin{document}

\title[]{
Survival probability for jump processes in unbounded domains on metric measure spaces
}

\author[Mariano]{Phanuel Mariano{$^\dag$}}
\thanks{\footnotemark {$\dag$} Research was supported in part by  NSF Grant DMS-2316968.}
	\address{Department of Mathematics\\
		Union College\\
		Schenectady, NY 12308,  U.S.A.}
	\email{marianop@union.edu}

\author[Wang]{Jing Wang{$^{\ddag}$}}
\thanks{\footnotemark {$\ddag$} Research was supported in part by NSF Grant DMS-2246817}
\address{ Department of Mathematics\\
Purdue University\\
West Lafayette, IN 47907,  U.S.A.}
\email{jingwang@purdue.edu}

\keywords{exit times, spectral asymptotics, heat kernel, symmetric stable process, jump process, metric measure space, horn-shaped domains, unbounded domains}
\subjclass{Primary 60J76, 35P15, 60J45; Secondary   58J65, 35J25, 35K08}


\begin{abstract}  

We study the large time behavior of the survival probability $\mathbb{P}_x\left(\tau_D>t\right)$ for symmetric jump processes in unbounded domains with a positive bottom of the spectrum. We prove asymptotic upper and lower bounds with explicit constants in terms of the bottom of the spectrum $\lambda(D)$. Our main result applies to symmetric jump processes in general metric measure spaces. For $\alpha$-stable processes in unbounded uniformly $C^{1,1}$ domains, our results provide a probabilistic interpretation and an equivalent geometric condition 
for $\lambda(D)>0$. In the case of increasing horn-shaped domains, the exponential rate of  decay for the survival probability is sharp. We also present examples of unbounded domains with non-discrete spectrum where our results apply. 

\end{abstract}

\maketitle


\section{Introduction and Main Results}

For $0<\alpha\leq2$, we consider the $d-$dimensional \textbf{symmetric $\alpha-$stable
process}, which is the L\'evy process $\left\{ X_{t}\right\} $
whose characteristic function is given by 
\[
\mathbb{E}\left[e^{i\xi\cdot X_{1}}\right]=e^{-\left|\xi\right|^{\alpha}},\xi\in\mathbb{R}^{d}.
\]
When $\alpha=2$, then $X_{t}$ is a Brownian motion running twice the
usual speed, which we denote by $W_t$. We can write $X_{t}$ in distribution as 
\begin{equation}
X_{t}=W_{S_{t}},\label{eq:subbordinated-process}
\end{equation}
where $S_{t}$, $t\ge0$ is an $\frac{\alpha}{2}$-stable subordinator independent of $W_t$, $t\ge0$.  The infinitesimal generator of $X_t$ is given by the fractional Laplacian $-\left(-\Delta\right)^{\alpha/2}$  on $\mathbb{R}^d$, which is a 
non-local operator.

For any open set $D\subset \R^d$, define the \textbf{first exit time}
from $D$, to be 
\begin{align}\label{eq-tau-D}
    \tau_{D}:=\inf\left\{ t>0:X_{t}\notin D\right\} .
\end{align}
The distribution function $\mathbb{P}_{x}\left(\tau_{D}>t\right)$ of $\tau_D$ is also known as the survival probability of $X_t$. 
Let $\mathcal{L}_{D}^X:=\left(-\Delta\right)^{\alpha/2}\mid_D$  be the Dirichlet fractional Laplacian. It is defined as the fractional Laplacian on $D$ with zero exterior condition.
When $D$ is bounded, it is known that $\mathcal{L}_{D}^X$ has a discrete spectrum when acting on $L^{2}\left(D\right)$. The large time (asymptotic) behavior of the survival probability has been obtained in \cite{Chen-Song-1997} using the spectral expansion of the heat kernel corresponding to $\mathcal{L}_{D}^X$:
\[
\lim_{t\to\infty}\frac{1}{t}\log\mathbb{P}_{x}\left(\tau_{D}>t\right)=-\lambda_{1}\left(D\right)
\]
where is $\lambda_{1}\left(D\right)$ is the first Dirichlet eigenvalue
corresponding to $\mathcal{L}_{D}^{X}$. However, such spectral method can not be applied to the case of unbounded domains, where  the generator $\mathcal{L}^X_{D}$ 
may not have a discrete spectrum. 

The purpose of this paper is to study the large time behavior of the survival probability $\mathbb{P}_{x}\left(\tau_{D}>t\right)$ of $\tau_D$ for symmetric jump-processes in an \textbf{unbounded} domain. 
Let $\lambda(D)$ be the bottom of the spectrum which can be defined as 
\begin{equation}\label{eq-lamdD}
\lambda\left(D\right):=\inf\text{spec}\left(\mathcal{L}_{D}^{X}\right)=\inf_{u\in C_{0}^{\infty}\left(D\right)\backslash\left\{ 0\right\} }\frac{\left[u\right]_{W^{\frac{\alpha}{2},2}\left(\mathbb{R}^{d}\right)}^{2}}{\left\Vert u\right\Vert _{L^{2}\left(\Omega\right)}^{2}},
\end{equation}
where $\left[u\right]_{W^{\frac{\alpha}{2},2}\left(\mathbb{R}^{d}\right)}^{2}$
is the Gagliardo--Slobodecki\u{\i} seminorm given by 
\[
\left[u\right]_{W^{\frac{\alpha}{2},2}\left(\mathbb{R}^{d}\right)}^{2}=\int\int_{\mathbb{R}^{d}\times\mathbb{R}^{d}}\frac{\left|u\left(x\right)-u\left(y\right)\right|^{2}}{\left|x-y\right|^{d+\alpha}}dxdy.
\]
If $\lambda(D)=0$, then $\mathbb{P}_{x}\left(\tau_{D}>t\right)$ does not decay exponentially as $t\to\infty$. In such cases, several results in the literature establish the integrability of exit times and obtain asymptotic estimates of the survival probability of non-local processes in unbounded domains  (see \cite{Banuelos-Bogdan-2004,Banuelos-Bogdan-2005,Mendez-Hernandez-2002,Mendez-Hernandez-2007,DeBlassie-1990,Mariano-Panzo-2020}). For Brownian motion in particular, classical results (such as \cite{Banuelos-Carrol1994,Banuelos-DeBlassie-Smits-2001,Lifshits-Shi-2002,Li-2003,DeBlassie-Smits-2005,Berg-2003,Collet-etal-2000}) address these estimates in unbounded domains.

In the case of an unbounded domain $D$ with $\lambda(D)>0$, asymptotic estimates of the survival probability have been obtained for local processes. For instance, B. Simon  (\cite{Simon-1981,Simon-1981b}) showed  that for Brownian motion, or more generally elliptic diffusions,
\begin{equation}
\lim_{t\to\infty}\frac{\log\mathbb{P}_{x}\left(\tau_{D}>t\right)}{t}=-\lambda\left(D\right), \quad x\in D\label{eq:coj-main}
\end{equation}
for any unbounded domain $D\subset \mathbb{R}^d$.  Many related results for local processes have been established since, including in the works of  \cite{Vogt-2019a,Ouhabaz-2006,Sznitman-1998,Bakharev-Nazarov-JFA-2021,Lifshits-Nazarov-2019}. More recently, \cite{Mariano-Wang-2022} extended such estimates to general Hunt processes satisfying sub-Gaussian bounds in unbounded domains on metric measure spaces.

However, there remains a gap in the literature concerning the limiting behavior of the survival probability of a non-local process in unbounded domains with $\lambda(D)>0$. Although it is widely believed that \eqref{eq:coj-main} still holds in this setting,  to the author's knowledge, no proof has been established in the literature, not even for symmetric $\alpha-$stable processes on $\mathbb{R}^{d}$. Several works  have addresses related topics, such as large-time asymptotics, heat kernel bounds and explicit bounds for exit times of non-local processes (see the following works to name a few, \cite{Chen-Tokle-2011,Chen-Kim-Song-2010,Kim-Song-2014,Siudeja-2006, Chen-Kim-Wang-2022,Banuelos-Latala-Mendez-2001,Bogdan-Grzywny-2010,Barrier-Bogdan-Grzywny-Ryznar-2015,Kwasnicki-2009,Giorgi-Smits-2010,Panzo2021,Chen-Kim-Wang-2019}). However, most of these results rely on bounded domains or assume certain regularity conditions on the   boundary of the domain.

In this paper, we obtain both upper and lower bounds for the survival probability of a symmetric jump process in an unbounded domain of positive bottom of spectrum. Our result is derived in a general setting of metric measure space, see Theorem \ref{thm:MainTheorem1}. For clarity and readability we present the result here first in a simplified version for the setting of  $\R^d$.

\begin{thm}\label{Main:Thm:StableProc}
Let $X_{t}$ be an $\alpha-$stable process in a domain $D\subset\mathbb{R}^{d}$, $\alpha\in (0,2]$. Let $\tau_{D}$ be its first exit time of $D$ as given in \eqref{eq-tau-D}. Suppose the bottom of the spectrum $\lambda\left(D\right)$
of $\mathcal{L}_{D}^X=\left(-\Delta\right)^{\alpha/2}\mid_{D}$ is positive. For every $\epsilon\in\left(0,1\right)$
 there exists a constant $C_{\epsilon}>0$
such 
\begin{equation}
\mathbb{P}_{x}\left(\tau_{D}>t\right)\leq C_{\epsilon}\exp\left(-\frac{1-\epsilon}{1+\frac{d}{4\alpha}}\lambda\left(D\right)t\right),\quad x\in D.\label{eq:Main-Simplified-Upper-1}
\end{equation}
Moreover, we have that for any $x\in D$
\begin{equation}
-\lambda\left(D\right)\leq\liminf_{t\to\infty}\frac{\log\mathbb{P}_{x}\left(\tau_{D}>t\right)}{t}\leq\limsup_{t\to\infty}\frac{\log\mathbb{P}_{x}\left(\tau_{D}>t\right)}{t}\leq-\frac{\lambda\left(D\right)}{1+\frac{d}{4\alpha}}.\label{eq:Main-Simplified-Asymp-1}
\end{equation}
\end{thm}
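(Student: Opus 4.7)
My plan is to treat the lower bound in \eqref{eq:Main-Simplified-Asymp-1} and the upper bound \eqref{eq:Main-Simplified-Upper-1} separately; the $\limsup$ in \eqref{eq:Main-Simplified-Asymp-1} is then an immediate logarithmic consequence of \eqref{eq:Main-Simplified-Upper-1}.

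For the lower bound $\liminf_{t\to\infty} t^{-1}\log \mathbb{P}_x(\tau_D>t) \geq -\lambda(D)$, fix $\epsilon>0$ and use \eqref{eq-lamdD} to pick a nonnegative $\phi \in C_0^\infty(D)$ with $\|\phi\|_{L^2(\mathbb{R}^d)}=1$ and Gagliardo seminorm $[\phi]_{W^{\alpha/2,2}(\mathbb{R}^d)}^2 \leq \lambda(D)+\epsilon$. Jensen's inequality applied through the spectral resolution of the self-adjoint operator $\mathcal{L}_D^X$ yields $\langle P_t^D \phi,\phi\rangle \geq e^{-t(\lambda(D)+\epsilon)}$. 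Because the Dirichlet heat kernel $p_s^D$ is jointly continuous and strictly positive on $D\times D$ for every $s>0$, there is $c = c(x,s,\phi)>0$ with $p_s^D(x,y)\geq c$ on $\mathrm{supp}\,\phi$; combining with symmetry and the Markov property promotes the integral estimate to the pointwise bound $\mathbb{P}_x(\tau_D > t+2s) \geq c'\,e^{-t(\lambda(D)+\epsilon)}$. Letting $\epsilon\downarrow 0$ finishes the $\liminf$.

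For \eqref{eq:Main-Simplified-Upper-1}, set $u(x,t):=\mathbb{P}_x(\tau_D>t)=\int_D p_t^D(x,y)\,dy$ and split at a ball $B_R(x)$:
\begin{equation*}
u(x,t) \;=\; \int_{D\cap B_R(x)} p_t^D(x,y)\,dy \;+\; \int_{D\setminus B_R(x)} p_t^D(x,y)\,dy.
\end{equation*}
The second integral is controlled by the free stable tail, $\int_{D\setminus B_R(x)} p_t^D(x,y)\,dy \leq \mathbb{P}(|X_t-x|>R) \leq Ct/R^\alpha$. For the first I use a Dirichlet heat-kernel estimate that marries ultracontractivity with spectral decay: starting from $p_t^D(x,x) \leq p_t(x,x) \leq Ct^{-d/\alpha}$, the symmetry identity $p_{t+2s}^D(x,x) = \|P_s^D p_t^D(x,\cdot)\|_2^2$ together with $\|P_s^D\|_{L^2\to L^2} \leq e^{-\lambda(D)s}$, optimized in $s$, gives $p_T^D(x,y) \leq (p_T^D(x,x)p_T^D(y,y))^{1/2} \leq C_\epsilon e^{-(1-\epsilon)\lambda(D)T}$ for $T$ large. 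Hölder-interpolating $\|p_t^D(x,\cdot)\|_{4/3}$ between $\|p_t^D(x,\cdot)\|_1 = u(x,t)$ and $\|p_t^D(x,\cdot)\|_2 \leq Ce^{-(1-\epsilon)\lambda(D)t/2}$ then produces a self-referential near-part estimate in $u(x,t)$; solving the resulting implicit inequality and optimizing $R$ against the tail $t/R^\alpha$ yields \eqref{eq:Main-Simplified-Upper-1} with the advertised rate $(1-\epsilon)\lambda(D)/(1+d/(4\alpha))$, the denominator reflecting the balance $\alpha/(d/4+\alpha)=1/(1+d/(4\alpha))$.

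The principal obstacle is obtaining the precise constant $1/(1+d/(4\alpha))$. Standard global $L^2(D)$-spectral arguments are not available, because typical unbounded $D$ with $\lambda(D)>0$ (e.g.\ a slab) have infinite Lebesgue measure, so $u(\cdot,t)\notin L^2(D)$ in general and the naive $\|P_t^D\mathbb{1}\|_\infty$ bound fails. The local/implicit scheme above forces a bootstrap in $u$, and the delicate step is arranging the Hölder exponent together with the splitting parameters so that, after solving the quadratic-type inequality and optimizing $R$, the denominator is the favorable $1+d/(4\alpha)$ rather than the $1+d/(2\alpha)$ or $1+d/\alpha$ a crude Cauchy--Schwarz split would deliver. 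Executing this bootstrap while tracking constants carefully is the core technical challenge.
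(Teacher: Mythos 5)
Your lower-bound argument matches the paper's: you pick an almost-minimizer $f_\epsilon$, use Jensen/the spectral theorem to get $\langle P_t^D f_\epsilon, f_\epsilon\rangle \geq e^{-t(\lambda(D)+\epsilon)}$, and upgrade to a pointwise bound through strict positivity and continuity of $p_D$ (Lemma 4.1 and Proposition 4.2 in the paper do exactly this). That part is fine.

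The upper bound is where there is a genuine gap. Your proposed bootstrap improves the \emph{near} term by Hölder-interpolating $\|p_t^D(x,\cdot)\|_q$ between the $L^1$-norm $u(x,t)$ and the $L^2$-norm, while leaving the tail at $Ct/R^\alpha$. Writing the interpolation exponent as $\theta=2(1-1/q)$, the near term becomes $\lesssim u^{1-\theta}\,e^{-\theta(1-\epsilon)\lambda(D)t}\,R^{d\theta/2}$ (the $R$-exponent and the exponent on $u$ are tied together by Hölder: the power of $|B_R|$ is exactly $\theta/2$). Plugging the ansatz $u\sim e^{-bt}$ into $u\lesssim u^{1-\theta}e^{-\theta a t}R^{d\theta/2}+t/R^\alpha$ with $a=(1-\epsilon)\lambda(D)$ and equating the two terms at the optimal $R$ gives $b(d\theta/2+\alpha)=\alpha(1-\theta)b+\alpha\theta a$, i.e.\ $b\theta(d/2+\alpha)=\alpha\theta a$, hence $b=a/(1+d/(2\alpha))$ \emph{for every} choice of $\theta>0$, including $q=4/3$. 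The interpolation exponent drops out, and the scheme you describe saturates at the preliminary rate $\frac{1-\epsilon}{1+d/(2\alpha)}$, not $\frac{1-\epsilon}{1+d/(4\alpha)}$. The "arranging the Hölder exponent ... so that the denominator is $1+d/(4\alpha)$" step is therefore not executable: no choice of $q$ reaches the advertised constant by this route.

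What the paper does instead is to improve the \emph{tail}, not the near part. The key ingredient you are missing is the heat-kernel bound $p_D(t,x,y)\leq C\,\bigl(\sup_{z\in D}\mathbb{P}_z(\tau_D>t/2)\bigr)\,p(t,x,y)$ (Lemma~\ref{lem:HK-Survival-bound}), which inserts a factor $\sup_z\mathbb{P}_z(\tau_D>t/2)$ into the tail integral and makes it decay exponentially as well. Iterating the $R$-optimization with that extra factor produces the geometric series $\sum_k \bigl(\tfrac{d/(4\alpha)}{1+d/(2\alpha)}\bigr)^k$, whose limit gives exactly $\frac{1}{1+d/(4\alpha)}$. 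The near term in the paper is handled by Chapman–Kolmogorov plus the $L^2$-operator norm bound (essentially the $\theta=1$ endpoint of your interpolation), which is already optimal for that term; the improvement comes entirely from the tail. Without Lemma~\ref{lem:HK-Survival-bound} (or an equivalent mechanism that injects exponential decay into the far-field contribution) the rate $1+d/(4\alpha)$ is out of reach.
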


We emphasize that the result above is not a straightforward extension of the local case. The polynomial decay of a non-local heat kernel introduces significant obstacles when applying classical methods developed for local processes, where the heat kernels satisfy sub-Gaussian bounds. Our approach instead relies on a self-improvement iteration technique.
The constant $-\frac{\lambda\left(D\right)}{1+\frac{d}{4\alpha}}$ is best possible under our approach. To our knowledge, these results are the first to provide explicit asymptotic bounds of the form  \eqref{eq:Main-Simplified-Asymp-1} for non-local jump processes in general unbounded domains $D$ with $\lambda(D)>0$.

One useful application of the above result is that it provides a probabilistic interpretation of the spectral property $\lambda(D)>0$ of a domain. 

\begin{corollary}
\label{cor:Equivalences-intro-general}Assuming the hypothesis of Theorem \ref{Main:Thm:StableProc} we have that for any $D\subset\mathbb{R}^{d}$,  
\begin{equation}\label{eq:lambda-pos-1}
\lambda\left(D\right)>0\iff\sup_{x\in D}\mathbb{E}_{x}\left[\tau_{D}\right]<\infty.
\end{equation}
\end{corollary}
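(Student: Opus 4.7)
The corollary is a biconditional, so I would prove the two implications separately. The forward direction is immediate from Theorem \ref{Main:Thm:StableProc}; the reverse direction is the substantive one and requires a Khas'minskii-type self-improvement combined with a spectral identification.

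For the forward direction, assume $\lambda(D)>0$ and fix any $\epsilon\in(0,1)$ in \eqref{eq:Main-Simplified-Upper-1} (say $\epsilon=1/2$). This furnishes a uniform (in $x\in D$) exponential tail bound
\[
\mathbb{P}_x(\tau_D>t)\leq C_\epsilon e^{-\kappa t},\qquad \kappa:=\frac{1-\epsilon}{1+d/(4\alpha)}\lambda(D)>0,
\]
and the layer-cake identity $\mathbb{E}_x[\tau_D]=\int_0^\infty\mathbb{P}_x(\tau_D>t)\,dt$ immediately yields $\sup_{x\in D}\mathbb{E}_x[\tau_D]\leq C_\epsilon/\kappa<\infty$.

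For the reverse direction, set $M:=\sup_{x\in D}\mathbb{E}_x[\tau_D]<\infty$ and $p(t):=\sup_{x\in D}\mathbb{P}_x(\tau_D>t)$. Markov's inequality gives $p(2M)\leq 1/2$. Using the strong Markov property at the deterministic time $t$ together with the identity $\tau_D=t+\tau_D\circ\theta_t$ on $\{\tau_D>t\}$, one obtains the submultiplicativity $p(t+s)\leq p(t)p(s)$, which iterates to $p(2Mk)\leq 2^{-k}$; monotonicity of $p$ then promotes this to uniform exponential decay $p(t)\leq 2e^{-ct}$ for all $t\geq 0$, with $c:=(\log 2)/(2M)$. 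I would then transfer this bound to the Dirichlet semigroup $P^D_t$ via the pointwise inequality $|P^D_t f(x)|\leq\|f\|_\infty\mathbb{P}_x(\tau_D>t)$, which gives $\|P^D_t\|_{L^\infty\to L^\infty}\leq 2e^{-ct}$. Symmetry of $P^D_t$ (inherited from the symmetric Dirichlet form associated to $(-\Delta)^{\alpha/2}|_D$) transfers this by duality to $\|P^D_t\|_{L^1\to L^1}\leq 2e^{-ct}$, and Riesz--Thorin interpolation yields $\|P^D_t\|_{L^2\to L^2}\leq 2e^{-ct}$. Since the spectral theorem for the self-adjoint semigroup gives the exact identity $\|P^D_t\|_{L^2\to L^2}=e^{-\lambda(D)t}$, comparing and letting $t\to\infty$ concludes $\lambda(D)\geq c>0$.

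The hard part is the reverse direction, where the main obstacle is bridging the purely probabilistic hypothesis of finite mean exit time with a spectral conclusion. The crux is the two-step passage: first, the Khas'minskii iteration (which relies only on the strong Markov structure of the Hunt process $X_t$) upgrades a finite mean to a uniform exponential tail; second, the combination of symmetry and Riesz--Thorin converts a pointwise $L^\infty$ tail bound into an $L^2$ operator-norm bound, at which point the spectral identity $\|P^D_t\|_{L^2\to L^2}=e^{-\lambda(D)t}$ for the self-adjoint Dirichlet semigroup completes the argument.
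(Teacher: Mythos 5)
Your forward direction is identical to the paper's: integrate the uniform exponential tail bound from Theorem \ref{Main:Thm:StableProc} via the layer-cake formula. The reverse direction is where you diverge. The paper simply cites the classical inequality $\lambda(D)\geq \bigl(\sup_{x\in D}\mathbb{E}_x[\tau_D]\bigr)^{-1}$ as ``well known,'' whereas you prove the implication from scratch: Markov's inequality gives $p(2M)\leq 1/2$, the strong Markov property gives submultiplicativity $p(t+s)\leq p(t)p(s)$, iteration and monotonicity give $p(t)\leq 2e^{-ct}$ with $c=(\log 2)/(2M)$, and then symmetry plus Riesz--Thorin interpolation convert the resulting $L^\infty\to L^\infty$ bound on $P^D_t$ into an $L^2\to L^2$ bound, which the spectral identity $\|P^D_t\|_{L^2\to L^2}=e^{-\lambda(D)t}$ turns into $\lambda(D)\geq c>0$. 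This is a correct and self-contained argument; the only trade-off is that your constant $(\log 2)/2$ is weaker than the sharp constant $1$ in the cited inequality (which one can recover by applying the same symmetry/interpolation step to the Green operator $G_D=\mathcal{L}_D^{-1}$, for which $\|G_D\|_{L^\infty\to L^\infty}=\sup_x\mathbb{E}_x[\tau_D]$ and $\|G_D\|_{L^2\to L^2}=1/\lambda(D)$, avoiding the lossy Khas'minskii step). Since the corollary only asserts $\lambda(D)>0$, the weaker constant is immaterial, and both routes are valid.
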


Furthermore, if the domain $D$ is assumed to be $C^{1,1}$, this equivalence can be extended to a geometric characterization of the domain via its \textbf{inradius} $R_D$, as specified in the theorem below. We say that $D$ is (uniformly) $C^{1,1}$
if there exists an $r>0$ such that for every $Q\in\partial D$,
there exist points $x_{\rm in}\in D,x_{\rm out}\in D^{c}$ with $B\left(x_{\rm in},r\right)\subset D$
and $B\left(x_{\rm out},r\right)\subset D^{c}$, tangent at $Q$, and satisfying $\overline{B\left(x_{\rm in},r\right)}\cap\overline{B\left(x_{\rm out},r\right)}=\left\{ Q\right\} $. The inradius is defined as the radius of the largest open ball contained in $D$, namely, 
\begin{equation}
R_D = \sup_{x\in D} \delta_D(x)   
\end{equation}
where $\delta_D(x):=d(x,D^c)$ denotes the distance from $x$ to $D^c$. 


In the following we prove sharp bounds for exit times in terms of $\delta_D \left(x\right)^{\alpha/2}$ for any $C^{1,1}$ domain that satisfies $\lambda (D)>0$. We are also able to show the equivalence of $\lambda (D)>0$ or $\sup_{x\in D}[\tau_D]<\infty$ with the condition $R_D<\infty$. We refer to the results of \cite{Bianchi-Brasco-2024,Bianchi-Brasco-2022} where universal bounds are given for $\lambda \left(D\right)$ in terms of the inradius $R_D$ for specific classes of domains. In particular \cite[Corollary 6.1]{Bianchi-Brasco-2022} proves the equivalence of $\lambda (D)>0$ with $R_D<\infty$ for simply connected domains on $\mathbb{R}^2$ when $1<\alpha<2$. We also point to the work of \cite[Theorem 4.6]{Barrier-Bogdan-Grzywny-Ryznar-2015} for sharp bounds for the mean exit time of jump processes for the case of {\it bounded} $C^{1,1}$ domains.


\begin{thm}\label{cor:Equivalences}
Suppose $D\subset\mathbb{R}^{d}$ is a uniformly $C^{1,1}$ domain
at scale $r>0$. 

(1) If $\lambda\left(D\right)>0$, 
then there exist  constants $C_{r,d},C_{D,r,d}>0$ such that for any $x\in D$, 
\begin{equation}
C_{r,d}\delta_{D}\left(x\right)^{\alpha/2}\leq\mathbb{E}_{x}\left[\tau_{D}\right]\leq C_{D,r,d}\delta_{D}\left(x\right)^{\alpha/2}.\label{eq:Mean-bound}
\end{equation} 

(2) If $R_{D}<\infty$ then 
\begin{equation}\label{eq:lambda-lower}
\lambda\left(D\right)\geq\frac{\omega_{d}r^{d}}{\left(R_{D}+2r\right)^{d+\alpha}}.
\end{equation}

As a consequence, the following equivalences hold:
\begin{equation}
R_{D}<\infty\iff\lambda\left(D\right)>0\iff\sup_{x\in D}\mathbb{E}_{x}\left[\tau_{D}\right]<\infty.\label{eq:C11-equiv}
\end{equation}
\end{thm}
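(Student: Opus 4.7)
The plan is to establish parts (1) and (2) separately and then chain the three conditions in \eqref{eq:C11-equiv} using Corollary \ref{cor:Equivalences-intro-general}. Throughout I will rely on the classical mean-exit-time identity for an $\alpha$-stable process from a ball, $\mathbb{E}_z[\tau_{B(x_0,\rho)}] = c_{\alpha,d}(\rho^2 - |z - x_0|^2)^{\alpha/2}$.

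For the lower bound in (1), I would consider two cases. If $\delta_D(x)\leq r$, the $C^{1,1}$ condition furnishes an interior ball $B(x_{\rm in}, r)\subset D$ tangent at the nearest boundary point $Q$; since $x$ sits on the segment from $Q$ to $x_{\rm in}$ at distance $r - \delta_D(x)$ from $x_{\rm in}$, monotonicity of exit times yields $\mathbb{E}_x[\tau_D]\geq c_{\alpha,d}(2r\delta_D(x) - \delta_D(x)^2)^{\alpha/2} \geq c_{\alpha,d}\,r^{\alpha/2}\,\delta_D(x)^{\alpha/2}$. If $\delta_D(x)>r$, apply the identity to $B(x,\delta_D(x))\subset D$ to obtain $\mathbb{E}_x[\tau_D]\geq c_{\alpha,d}\delta_D(x)^\alpha\geq c_{\alpha,d}\,r^{\alpha/2}\,\delta_D(x)^{\alpha/2}$. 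For the upper bound, I would combine the global bound $M:=\sup_y\mathbb{E}_y[\tau_D]<\infty$, available from Corollary \ref{cor:Equivalences-intro-general}, with a local boundary barrier. The global bound handles $x$ with $\delta_D(x)\geq r$ trivially (via $M\leq Mr^{-\alpha/2}\delta_D(x)^{\alpha/2}$), while for $x$ with $\delta_D(x)<r$ I would use a $C^{1,1}$ barrier based on $(-\Delta)^{\alpha/2}\delta_D^{\alpha/2}\geq c>0$ near $\partial D$ (in the spirit of \cite[Theorem 4.6]{Barrier-Bogdan-Grzywny-Ryznar-2015}) to conclude $\mathbb{E}_x[\tau_D\wedge\tau_U]\leq C\,\delta_D(x)^{\alpha/2}$ on the tubular neighborhood $U=\{\delta_D<r\}$; applying the strong Markov property at $\tau_U$, the residual expectation is bounded by $M\cdot\mathbb{P}_x(\tau_U<\tau_D)$, and the hitting-type estimate $\mathbb{P}_x(\tau_U<\tau_D)\lesssim \delta_D(x)^{\alpha/2}$ for $\alpha$-stable processes in $C^{1,1}$ domains closes the bound.

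For Part (2), I will use the variational characterization \eqref{eq-lamdD}. For any $u\in C_c^\infty(D)$ (so $u\equiv 0$ on $D^c$), splitting the Gagliardo double integral across $D\times D^c$ and $D^c\times D$ and discarding the positive $D\times D$ piece gives
\[[u]_{W^{\alpha/2,2}(\mathbb{R}^d)}^2 \geq 2\int_D u(x)^2\,\kappa_D(x)\,dx,\qquad \kappa_D(x):=\int_{D^c}\frac{dy}{|x-y|^{d+\alpha}}.\]
For any $x\in D$, the $C^{1,1}$ condition supplies an exterior ball $B(x_{\rm out},r)\subset D^c$ tangent at the nearest boundary point $Q$; the chain $|x-x_{\rm out}|\leq\delta_D(x)+r\leq R_D+r$ yields $|x-y|\leq R_D+2r$ for every $y\in B(x_{\rm out},r)$, so $\kappa_D(x)\geq \omega_d r^d/(R_D+2r)^{d+\alpha}$ uniformly in $x$, which gives \eqref{eq:lambda-lower} after insertion in the Rayleigh quotient (with the constant $2$ absorbed into the normalization convention for $\lambda(D)$).

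The equivalences \eqref{eq:C11-equiv} then close quickly: Part (2) gives $R_D<\infty\Rightarrow\lambda(D)>0$; Corollary \ref{cor:Equivalences-intro-general} gives $\lambda(D)>0\Leftrightarrow\sup_x\mathbb{E}_x[\tau_D]<\infty$; and the interior-ball lower bound shows that along any sequence $x_n\in D$ with $\delta_D(x_n)\to\infty$ one has $\mathbb{E}_{x_n}[\tau_D]\geq c_{\alpha,d}\delta_D(x_n)^\alpha\to\infty$, so $\sup_x\mathbb{E}_x[\tau_D]<\infty$ forces $R_D<\infty$. The main obstacle I expect is the boundary-side upper bound in (1): the bounded-domain analogue is classical, but in an unbounded setting one must carefully patch the short-time near-boundary barrier estimate with the long-time decay guaranteed by $\lambda(D)>0$, without producing any additive constant that would destroy the $\delta_D(x)^{\alpha/2}$ scaling as $\delta_D(x)\to 0$.
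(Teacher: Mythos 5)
Your proposal is correct and runs along essentially the same lines as the paper's proof. For Part (2) your argument is identical: you split the Gagliardo integral across $D\times D^c$, use the uniform exterior ball from the $C^{1,1}$ hypothesis to bound $|x-y|\le R_D+2r$ on $B(p_{\mathrm{out}},r)$, and insert into the Rayleigh quotient (your factor of $2$ from counting both $D\times D^c$ and $D^c\times D$ simply gives a slightly better constant than the paper states, which keeps only one side). The interior-ball lower bound in Part (1) is also the paper's argument, split at $\delta_D(x)\lessgtr r$ rather than $r/2$, an inessential difference.

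The only genuine variation is your treatment of the upper bound in Part (1) near the boundary. You propose to work with the tubular neighborhood $U=\{\delta_D<r\}$, bound $\mathbb{E}_x[\tau_D\wedge\tau_U]$ by a barrier based on $(-\Delta)^{\alpha/2}\delta_D^{\alpha/2}\gtrsim 1$, and then control the residual via $\sup_y\mathbb{E}_y[\tau_D]\cdot\mathbb{P}_x(\tau_U<\tau_D)$ with the hitting estimate $\mathbb{P}_x(\tau_U<\tau_D)\lesssim\delta_D(x)^{\alpha/2}$. The paper instead localizes to the ball $D_q=D\cap B(q,r)$ around the nearest boundary point $q$, applies the strong Markov property at $\tau_{D_q}$, and quotes two lemmas from Bogdan--Grzywny--Ryznar: $\mathbb{E}_x[\tau_{D_q}]\le C\,r^{\alpha/2}\delta_D(x)^{\alpha/2}$ and $\mathbb{P}_x\bigl(|X_{\tau_{D_q}}-q|\ge r\bigr)\le C\,\mathbb{E}_x[\tau_{D_q}]/r^\alpha$. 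These are the same decomposition in spirit (local mean exit time of order $\delta_D(x)^{\alpha/2}$ plus a spreading probability of the same order times the global bound $C/\lambda(D)$), and your version reduces to theirs since $D_q\subset U$, hence $\{\tau_U<\tau_D\}\subset\{X_{\tau_{D_q}}\in D\}$. The paper's choice of $D_q$ is technically cleaner because $D_q$ is a bounded $C^{1,1}$ set to which the cited results apply directly, whereas $U$ could be geometrically irregular in an unbounded domain; if you go the $U$ route you should be explicit that the hitting bound for $U$ is obtained by passing through $D_q$. Your closing of the three-way equivalence (interior ball lower bound forces $R_D<\infty$ when $\sup_x\mathbb{E}_x[\tau_D]<\infty$) matches the paper.
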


Another consequence of Theorem \ref{Main:Thm:StableProc} is an upper bound for the Dirichlet heat kernel of $\{X_t\}_{t\ge0}$ in unbounded domains.  There is extensive literature on such estimates for both local  (e.g., \cite{Zhang-2002,Davies-1987,Gyrya-Saloff-Coste-2011}) and non-local processes (e.g., \cite{Chen-Kim-Song-2010,Siudeja-2006,Kim-Song-2014,Chen-Kumagai-Wang-2020b}).  Our contribution addresses convex domains with $\lambda (D)>0$. For convex (possibly unbounded) domains, Siudeja \cite[Theorem 1.6]{Siudeja-2006} obtained an upper bound for the Dirichlet heat kernel of an $\alpha$-stable process.
However, this estimate does not provide an explicit exponential rate of decay as $t\to\infty$. In view of \eqref{eq:Main-Simplified-Asymp-1}, one expects this rate to be governed by $\lambda(D)$. The next proposition gives an upper bound that makes this dependence explicit.

For the rest of the paper, we write $f\left(x\right)\asymp g\left(x\right)$
if and only if there exists $c_{1},c_{2}>0$ such that $c_{1}f\left(x\right)\leq g\left(x\right)\leq c_{2}f\left(x\right)$.

\begin{prop}\label{prop-kernel-bd}
 Let $\epsilon\in\left(0,1\right)$. For any convex domain $D\subset\mathbb{R}^{d}$,
there exists a constant $C_{\epsilon}>0$ such that
\begin{equation}
p_{D}\left(t,x,y\right)\leq C_{\epsilon}e^{-\frac{1}{2}\frac{\left(1-\epsilon\right)}{1+\frac{d}{4\alpha}}\lambda\left(D\right)t}\left(1\wedge\frac{\delta_{D}\left(x\right)^{\alpha/2}}{\sqrt{t}}\right)\left(1\wedge\frac{\delta_{D}\left(y\right)^{\alpha/2}}{\sqrt{t}}\right)p\left(t,x,y\right),\label{eq:HK-up}
\end{equation}   
where $p(t,x,y)$ is the free heat kernel that satisfies 
\[
p\left(t,x,y\right)\asymp t^{-d/\alpha}\wedge\frac{t}{\left|x-y\right|^{d+\alpha}}.
\]
\end{prop}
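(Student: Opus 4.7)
The plan is to combine two complementary ingredients. The first is Siudeja's heat-kernel bound for convex domains \cite{Siudeja-2006}, which states that for any convex $D\subset\mathbb{R}^d$ and any $s>0$,
\[
p_D(s,x,y) \leq C_0\, \phi(x,s)\,\phi(y,s)\, p(s,x,y), \qquad \phi(x,s) := 1\wedge \frac{\delta_D(x)^{\alpha/2}}{\sqrt{s}}.
\]
This bound already captures the boundary-distance factors and the off-diagonal free-kernel tail appearing in the target estimate, but carries no exponential decay in $s$. The second ingredient is the sharp survival-probability estimate from Theorem \ref{Main:Thm:StableProc}, namely $\mathbb{P}_x(\tau_D>s)\leq C_\epsilon e^{-a_\epsilon\lambda(D)s}$ with $a_\epsilon := (1-\epsilon)/(1+d/(4\alpha))$, which supplies the exponential temporal decay but no spatial structure.

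First I would apply Chapman–Kolmogorov with an equal split,
\[
p_D(t,x,y) = \int_D p_D(t/2,x,z)\,p_D(t/2,z,y)\,dz,
\]
bound the second factor in $z$-supremum via Siudeja, giving
\[
\|p_D(t/2,\cdot,y)\|_\infty \leq C\,\phi(y,t/2)\,(t/2)^{-d/\alpha},
\]
and then integrate the first factor against unity to invoke Theorem \ref{Main:Thm:StableProc}. This yields the one-sided estimate
\[
p_D(t,x,y) \leq C_\epsilon\, \phi(y,t/2)\, t^{-d/\alpha}\, e^{-a_\epsilon \lambda(D) t/2},
\]
and by symmetry the analogous bound with $\phi(x,t/2)$ in place of $\phi(y,t/2)$.

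Next I would combine Siudeja's estimate with one of these symmetric exponential bounds via a geometric-mean step: from $p_D(t,x,y)^2$ bounded by the product of Siudeja's bound and the exponential bound, taking the square root produces the factor $e^{-\frac{1}{2}a_\epsilon\lambda(D)t}$ with the correct $\phi(x,t)\phi(y,t)p(t,x,y)$ structure after absorbing $\phi(\cdot,t/2) \leq \sqrt{2}\,\phi(\cdot,t)$ and comparing $t^{-d/\alpha}$ with $p(t,x,y)$ on the diagonal regime. The off-diagonal regime $|x-y|>t^{1/\alpha}$ would be handled separately by combining Siudeja directly with the exponential decay of $\mathbb{P}_x(\tau_D>t)$, where the off-diagonal tail of $p(t,x,y)$ is not improved by the spectral estimate. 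The factor $\tfrac12$ in the final exponent is precisely the exponent arising from the square-root averaging.

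The main obstacle is the simultaneous preservation of three distinct features of the target bound: (i) the boundary-distance factors $\phi(x,t)\phi(y,t)$, (ii) the sharp off-diagonal tail $p(t,x,y)\asymp t^{-d/\alpha}\wedge t/|x-y|^{d+\alpha}$, and (iii) the exponential temporal decay $e^{-c\lambda(D)t}$. Any purely $L^2$-spectral argument secures (iii) at the cost of destroying (ii), whereas any purely pointwise estimate based on Siudeja preserves (i)--(ii) but misses (iii). The geometric-mean step is the mechanism that reconciles the two approaches and keeps all three features simultaneously, at the price of the factor $\tfrac12$.
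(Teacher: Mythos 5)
The proposal does not reach the stated bound; two of its central steps do not produce the claimed factors.

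First, the step
\[
p_D(t,x,y)=\int_D p_D(t/2,x,z)\,p_D(t/2,z,y)\,dz\ \le\ \|p_D(t/2,\cdot,y)\|_\infty\,\mathbb{P}_x(\tau_D>t/2)
\]
irreversibly discards the off-diagonal tail: bounding $p_D(t/2,\cdot,y)$ by its supremum gives only $t^{-d/\alpha}$, not $p(t,x,y)$, and when $|x-y|\gg t^{1/\alpha}$ the resulting bound is far larger than the target. The paper avoids this by Lemma~\ref{lem:HK-Survival-bound}: splitting the $z$-integral into the regions $A_1(x,y)=\{d(x,z)\ge \tfrac12 d(x,y)\}$ and $A_2(x,y)=\{d(z,y)\ge\tfrac12 d(x,y)\}$, one keeps the pointwise bound $p_D(t/2,x,z)\le C\,p(t,x,y)$ on $A_1$ (valid because $d(x,z)$ is comparable to $d(x,y)$ there) while the $z$-integral of the other factor gives the survival probability. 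That yields $p_D(t,x,y)\le C\sup_{z}\mathbb{P}_z(\tau_D>t/2)\,p(t,x,y)$ with the full free-kernel tail intact. Your outline neither reproduces nor replaces this lemma, and your promise to "handle the off-diagonal regime separately by combining Siudeja directly with the exponential decay of $\mathbb{P}_x(\tau_D>t)$" is not a proof: a one-sided product of those two quantities is not a pointwise bound on $p_D(t,x,y)$.

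Second, the geometric-mean step does not produce the claimed exponent or boundary factors. Writing $a_\epsilon=(1-\epsilon)/(1+\tfrac{d}{4\alpha})$, the best exponential pointwise input you have is $p_D(t,x,y)\le C_\epsilon e^{-\frac12 a_\epsilon\lambda(D)t}\,p(t,x,y)$ (or your $\phi(y,t/2)\,t^{-d/\alpha}\,e^{-\frac12 a_\epsilon\lambda(D)t}$). Its geometric mean with Siudeja's $\phi(x,t)\phi(y,t)\,p(t,x,y)$ gives $e^{-\frac14 a_\epsilon\lambda(D)t}$, not $e^{-\frac12 a_\epsilon\lambda(D)t}$, and gives square roots of the $\phi$-factors (e.g.\ $\sqrt{\phi(x,t)}\,\phi(y,t)$), not the full product $\phi(x,t)\phi(y,t)$. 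The paper obtains the stated constant by a different mechanism that is not an averaging at all: a first Chapman--Kolmogorov with unequal split $(1-\eta)t+\eta t$, applying the exponential bound (from Lemma~\ref{lem:HK-Survival-bound}) to the long piece and Siudeja's one-sided bound to the short piece, and choosing $\eta$ so that $(1-\eta)^2=1-\epsilon$, gives a one-sided estimate $p_D(t,x,y)\le C_\epsilon e^{-\frac12 a_\epsilon\lambda(D)t}\phi(y,t)\,p(t,x,y)$; then a second Chapman--Kolmogorov with equal split $t/2+t/2$ applies this one-sided bound with $x$ to one factor and with $y$ to the other. Because \emph{both} factors in the second split carry $e^{-\frac12 a_\epsilon\lambda(D)t/2}$, the exponentials multiply to $e^{-\frac12 a_\epsilon\lambda(D)t}$ with no loss, and both $\phi$-factors appear at full power. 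An honest geometric mean will not reproduce this.
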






In the case of non-local processes in $C^{1,1}$ domains in $\R^d$, we point to the seminal work of Chen-Kim-Song \cite[Theorem 1.1]{Chen-Kim-Song-2010}, where sharp heat kernel
bounds were established for general $C^{1,1}$ open sets $D\subset\mathbb{R}^{d}$. Specifically:\\
(1) When $D$ is a $C^{1,1}$  open (possibly unbounded) set, they proved
\begin{equation}\label{eq:Chen-Kim-Song-HK1}
p_{D}\left(t,x,y\right)\asymp_{T}\left(1\wedge\frac{\delta_{D}\left(x\right)^{\alpha/2}}{\sqrt{t}}\right)\left(1\wedge\frac{\delta_{D}\left(y\right)^{\alpha/2}}{\sqrt{t}}\right)p\left(t,x,y\right),0\leq t\leq T.
\end{equation}
(2) When $D$ is a $C^{1,1}$ bounded open set, they showed that
\begin{equation}\label{eq:Chen-Kim-Song-HK2}
p_{D}\left(t,x,y\right)\asymp_{T}e^{-\lambda_{1}\left(D\right)t}\delta_{D}\left(x\right)^{\alpha/2}\delta_{D}\left(y\right)^{\alpha/2} ,t\geq T.
\end{equation}  
For certain unbounded domains where $\lambda(D)=0$, it has been shown that \eqref{eq:Chen-Kim-Song-HK1} extends to large time $t>T$ for certain classes of domains (see \cite{Chen-Tokle-2011,Siudeja-2006}).  
However, when $\lambda(D)>0$, both expressions \eqref{eq:Chen-Kim-Song-HK1} and  \eqref{eq:Chen-Kim-Song-HK2} have limitations: they do not fully cover all $t>0$ and all unbounded domains with $\lambda(D)>0$ that have $C^{1,1}$ boundaries.
We believe that the  double-sided heat kernel estimates valid for all $t>0$ should take the form of \eqref{eq:HK-up}, up to a constant in the exponential term. However, deriving such a result is far from a straightforward combination of \eqref{eq:Chen-Kim-Song-HK1} and  \eqref{eq:Chen-Kim-Song-HK2}, and we leave it as an open direction for future research.

While it is conjectured and widely believed that \eqref{eq:coj-main} should hold for a jump process in a general domain $D\subset\R^d$ satisfying $\lambda\left(D\right)>0$, to our knowledge, \eqref{eq:Main-Simplified-Asymp-1} currently provides the best available bound. However, by restricting to certain specific class of domains,  we are able to obtain the desired matching upper and lower bounds for \eqref{eq:Main-Simplified-Asymp-1}. Specifically, we prove this for increasing horn-shaped domains as defined below.

\begin{df}\label{def-hornshape}
Let $H\subset\mathbb{R}^{d}$ be an open set in $\mathbb{R}^{+}\times\mathbb{R}^{d-1}$. For each $x_1\in \mathbb{R}^{+}$, define  the cross section of $H$ at $x_1$ by
\[
H(x_1):=\{x'\in\R^{d-1}: (x_1, x')\in H\},
\]
so that $\left\{ x_1\right\} \times H(x_1)=H\cap\left(\left\{ x_1\right\} \times\mathbb{R}^{d-1}\right)$. We say that $H$ is (increasing) \textbf{horn-shaped} if it satisfies the monotonicity condition: for any $x_1<x_1^{\prime}$, we have $H(x_1)\subset H(x_1^{\prime})$.\\
Consider the set $h\subset \R^{d-1}$ defined by
\begin{align}\label{eq-h}
h=\bigcup_{x_1>0}H(x_1).
\end{align}
It is the projection of $H$ onto $\mathbb{R}^{d-1}$.  When $h$ is bounded,  it is known that the generator of the 
 $\left(d-1\right)$ dimensional $\alpha-$stable process
killed upon its exiting time of $h$ has a discrete spectrum. We denote by $\lambda_1(h)$ its first Dirichlet eigenvalue.

\end{df}

Horn-shaped domains have been extensively studied in the literature (e.g. \cite{Chen-Kim-Wang-2022,Siudeja-2006,Banuelos-Berg-1996,Mendez-Hernandez-2007,Cranston-Li-Horn-1997}), with much of the focus on the decreasing ones, as in \cite{Chen-Kim-Wang-2022, Banuelos-Berg-1996,Cranston-Li-Horn-1997}. In this work  we focus on increasing horn-shaped domains as done by Siudeja in \cite{Siudeja-2006} (see Figure \ref{fig:horn-shape} for an example of an increasing horn-shaped domain).  Mendez-Hernandez \cite{Mendez-Hernandez-2007} also studied increasing horn-shaped domains, specifically in the case where $h=\mathbb{R}^{d-1}$ and $\lambda(H)=0$, and obtained  the exact large time asymptotics for $\mathbb{P}_x(\tau_D>t)$ for various examples.
In contrast, the result presented in the theorem below  studies increasing horn-shaped domains with $\lambda(H)>0$. Additionally, we do not assume convexity, distinguishing our approach from those in \cite{Siudeja-2006, Mendez-Hernandez-2007}. 



\begin{thm}
\label{thm:Horn-Domains-1}Let $X_{t}$ be a symmetric $\alpha-$stable
process on $\mathbb{R}^{d}$. Let $H$ be a horn-shaped domain as in Definition \ref{def-hornshape} with a bounded projection $h$ as given in \eqref{eq-h}. We denote by $\tau_H$ the exit time of $X_t$ and by $\lambda(H)$ its bottom of the spectrum. Then for any starting point $x\in H$, we have
\begin{equation}\label{eq:Horn-sharp}
\lim_{t\to\infty}\frac{\log\mathbb{P}_{x}\left(\tau_{H}>t\right)}{t}=-\lambda\left(H\right)=-\lambda_{1}\left(h\right).
\end{equation}
\end{thm}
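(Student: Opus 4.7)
The plan is to sandwich $\mathbb{P}_x(\tau_H>t)$ between matching rates. The lower bound $\liminf_{t\to\infty}t^{-1}\log\mathbb{P}_x(\tau_H>t)\ge -\lambda(H)$ is already supplied by Theorem~\ref{Main:Thm:StableProc}. For the upper bound I would exploit the horn containment $H\subset\mathbb{R}\times h$. Separately, I would prove the variational inequality $\lambda(H)\le \lambda_1(h)$. Chaining these three facts gives $-\lambda(H)\le\liminf\le\limsup\le -\lambda_1(h)\le -\lambda(H)$, forcing every inequality to be an equality and yielding $\lim t^{-1}\log\mathbb{P}_x(\tau_H>t) = -\lambda(H)=-\lambda_1(h)$.

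\textbf{Upper bound via projection.} The increasing-horn hypothesis gives $H(x_1)\subset h$ for every $x_1>0$, hence $H\subset \mathbb{R}\times h$ and $\tau_H\le \tau_{\mathbb{R}\times h}$. Writing $X_t=(X_t^{(1)},X_t')$ with $X_t'\in\mathbb{R}^{d-1}$, the factored characteristic function $\mathbb{E}[e^{i\xi'\cdot X_t'}]=e^{-t|\xi'|^{\alpha}}$ identifies $X'$ as a $(d-1)$-dimensional symmetric $\alpha$-stable process, and $\tau_{\mathbb{R}\times h}=\tau_h^{X'}$. Because $h$ is bounded, the Chen-Song spectral result \cite{Chen-Song-1997} provides $\mathbb{P}_{x'}(\tau_h^{X'}>t)\le C_{x'}e^{-\lambda_1(h)t}$ for $t\ge 1$, and therefore $\limsup_{t\to\infty}t^{-1}\log\mathbb{P}_x(\tau_H>t)\le -\lambda_1(h)$.

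\textbf{Variational bound $\lambda(H)\le \lambda_1(h)$.} For each $a>0$ the monotonicity $H(a)\subset H(x_1)$ for $x_1>a$ gives the half-cylinder $C_a:=(a,\infty)\times H(a)\subset H$, so $\lambda(H)\le\lambda(C_a)$ by domain monotonicity. I would test $\lambda(C_a)$ against product functions $u_L(x_1,x')=\eta_L(x_1)\phi_a(x')$, where $\phi_a$ is the first Dirichlet eigenfunction of the $(d-1)$-dim fractional Laplacian on $H(a)$ (extended by zero, $\|\phi_a\|_{L^2}=1$), and $\eta_L$ is a smooth cutoff on $(a,a+L)$ with $\|\eta_L\|_{L^2}^2\asymp L$ and $[\eta_L]_{W^{\alpha/2,2}(\mathbb{R})}^2=o(L)$ (the dilation $\eta_L(\cdot)=\eta(\cdot/L)$ gives exactly the scaling $L^{1-\alpha}$). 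Then $u_L$ is supported in $C_a$, and integrating one variable at a time in the Gagliardo double integral using the kernel identity $\int_{\mathbb{R}}(s^2+r^2)^{-(d+\alpha)/2}\,ds=c_{d,\alpha}\,r^{-(d-1+\alpha)}$ (equivalently, a Fourier concentration argument) yields $[u_L]^2/\|u_L\|^2\to \lambda_1(H(a))$ as $L\to\infty$, so $\lambda(C_a)\le\lambda_1(H(a))$. Since $H(a)\uparrow h$ monotonically with $h$ bounded, classical monotone convergence of Dirichlet eigenvalues gives $\lambda_1(H(a))\downarrow\lambda_1(h)$ as $a\to\infty$, hence $\lambda(H)\le\lambda_1(h)$.

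\textbf{Main obstacle.} The delicate step is the non-local variational computation. In the local case $\alpha=2$ the symbol $-\Delta=-\partial_1^2-\Delta_{d-1}$ separates and the half-cylinder eigenvalue is trivially $\lambda_1(\mathbb{R}^+)+\lambda_1(H(a))=\lambda_1(H(a))$. For $\alpha<2$ the symbol $(\xi_1^2+|\xi'|^2)^{\alpha/2}$ does not split, and the product-test-function estimate hinges on the Fourier concentration $|\hat\eta_L|^2/\|\eta_L\|^2\to 2\pi\,\delta_0(\xi_1)$ as $L\to\infty$, combined with the sub-additivity $(\xi_1^2+|\xi'|^2)^{\alpha/2}\le|\xi_1|^\alpha+|\xi'|^\alpha$ valid for $\alpha\le 2$, to isolate the cross-section contribution and absorb the cross-dimensional error coming from the $\eta_L$-$\phi_a$ interaction.
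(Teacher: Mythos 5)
Your argument is correct, and the chain of inequalities closes: $-\lambda(H)\le\liminf\le\limsup\le-\lambda_1(h)\le-\lambda(H)$ forces all equalities, giving both the limit and the identity $\lambda(H)=\lambda_1(h)$. The probabilistic lower bound via Theorem~\ref{Main:Thm:StableProc} and the projection upper bound via $H\subset\mathbb{R}\times h$ plus Chen--Song match the paper's Step~2 exactly (the paper also factors $X_t=(X_t^1,Z_t)$ through a common subordinator and observes $\tau_H\le\tau_{Q_{-\infty}(h)}=\tau_h^{Z}$).

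Where you genuinely diverge from the paper is the proof that $\lambda(H)\le\lambda_1(h)$. The paper sandwiches $Q_a(H(a))\subset H\subset Q_0(h)$ and then invokes an external result of Bakharev--Nazarov (Corollary~1 of that reference) to the effect that the spectrum of the killed $\alpha$-stable generator on any semi-tube $Q_a(\omega)$ is exactly the ray $[\lambda_1(\omega),\infty)$; this gives both $\lambda(Q_a(H(a)))=\lambda_1(H(a))$ and $\lambda(Q_0(h))=\lambda_1(h)$ directly, so the analytic identity $\lambda(H)=\lambda_1(h)$ is established before touching the survival probability. You instead prove only the one-sided bound $\lambda((a,\infty)\times H(a))\le\lambda_1(H(a))$, by testing against separated functions $\eta_L\otimes\phi_a$ and using the Fourier representation $\mathcal{E}(u,u)=\int|\xi|^\alpha|\hat u|^2\,d\xi$ together with the sub-additivity $(\xi_1^2+|\xi'|^2)^{\alpha/2}\le|\xi_1|^\alpha+|\xi'|^\alpha$ and the cutoff scaling $[\eta_L]^2/\|\eta_L\|^2\asymp L^{-\alpha}\to0$. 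The reverse inequality $\lambda(H)\ge\lambda_1(h)$ then comes for free from the closing of the probabilistic chain rather than being shown analytically. This trades an external spectral-theory input (a non-trivial structure theorem for non-local operators on tubes) for a short self-contained variational computation; the payoff is a more elementary and more transparent proof, at the cost of not obtaining the two-sided spectral identity independently of the survival-probability machinery. Both routes correctly handle the non-separating symbol $|\xi|^\alpha$ for $\alpha<2$, and both rely on continuity of Dirichlet eigenvalues under the increasing exhaustion $H(a)\uparrow h$.

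One small point worth making explicit if you write this up: the Gagliardo seminorm in the paper's definition \eqref{eq-lamdD} carries a dimension-dependent normalization relative to the Fourier-side quantity $\int|\xi|^\alpha|\hat u|^2\,d\xi$. Your computation mixes the $d$-dimensional form (for $u_L$) with the $(d-1)$-dimensional form (for $\phi_a$), so you should phrase everything in terms of the generator normalization, i.e.\ $\mathcal{E}_n(u,u)=\int_{\mathbb{R}^n}|\xi|^\alpha|\hat u|^2\,d\xi$, for which the sub-additivity cleanly yields $\mathcal{E}_d(u_L,u_L)\le\mathcal{E}_1(\eta_L,\eta_L)\|\phi_a\|^2+\|\eta_L\|^2\mathcal{E}_{d-1}(\phi_a,\phi_a)$, before translating back if desired.
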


Before ending the introduction, we point out that Theorem \ref{Main:Thm:StableProc} is in fact derived in a more general metric measure space setting (see Theorem \ref{thm:MainTheorem1}). There has been significant progress on addressing \eqref{eq:coj-main} for local processes in geometric settings beyond $\R^d$. For instance, the results in \cite{Carfagnini-Gordina-Teplyaev-2024,Carfagnini-Gordina-2024} imply \eqref{eq:coj-main} for diffusion processes in all bounded domains within a general metric measure space and a sub-elliptic setting. In the unbounded case, \cite{Mariano-Wang-2022} proves \eqref{eq:coj-main} for all domains with positive bottom of spectrum in a metric measure space setting. The main contribution of this paper, Theorem \ref{Main:Thm:StableProc}, provides both upper and lower bound for the survival probability  $\mathbb{P}_{x}\left(\tau_{D}>t\right)$ of all jump processes in unbounded domains with $\lambda(D)>0$, within general metric measure spaces. This framework includes, for instance, jump processes in sub-Riemannian manifolds, fractals, and others non-Euclidean spaces. See \cite[Section 8]{Mariano-Wang-2022} for a range of examples of metric measure spaces that satisfy the geometric hypothesis. See also \cite[Chapter 6]{Chen-Kumagai-Wang-2021-MAIN} for many examples of processes satisfying our conditions.

We organize the paper as follows. In Section \ref{sec2} we introduce the  metric measure space setting along with the main assumptions, and we state the main results in this framework. These include the sharp asymptotic upper and lower bounds given in  Theorems  \ref{thm:MainTheorem1} and \ref{Thm2}, which in particular cover Theorem \ref{Main:Thm:StableProc}. Sections \ref{sec3} and \ref{sec4} are devoted to the proofs of Theorems \ref{thm:MainTheorem1} and \ref{Thm2}, respectively. In Section \ref{sec5}, we give the proofs of Corollary \ref{cor:Equivalences-intro-general}, Theorem \ref{cor:Equivalences}, Proposition \ref{prop-kernel-bd}, and Theorem \ref{thm:Horn-Domains-1} concerning horn-shaped domains. Finally, in Section \ref{sec:ex}, we present non-trivial examples of unbounded domains to which our results apply.

\section{Main Results for Jump Processes in Metric Measure Spaces}\label{sec2}

Let $\left(M,d,\mu\right)$ be a metric measure space endowed with a regular {Dirichlet form} $\left(\mathcal{E},\mathcal{F}\right)$
on $L^{2}\left(M,\mu\right)$. It is known that there exists a {Hunt process} $\left\{ X_{t}\right\} _{t\geq0}$ associated with it (see\cite{Fukushima-book-ed1-1994}). In this paper, we focus on pure-jump stochastic processes, which are generated by non-local Dirichlet forms of the form: for any $f,g\in\mathcal{F}$,
\begin{align}\label{eq-Diri}
    \mathcal{E}\left(f,g\right)=\int_{M\times M\backslash\triangle}\left(f\left(x\right)-f\left(y\right)\right)\left(g\left(x\right)-g\left(y\right)\right)J\left(dx,dy\right),
\end{align}
where $\triangle$ is the diagonal set $\left\{ \left(x,x\right):x\in M\right\}$, and $J\left(\cdot,\cdot\right)$ is a symmetric Radon measure on $M\times M\backslash\triangle$.

Consider the heat {semigroup} $\left\{ P_{t}\right\} _{t\geq0}$ that is associated with $\left(\mathcal{E},\mathcal{F}\right)$. It is known that, outside of a properly exceptional set $\mathcal N$, the following holds for any bounded Borel measurable function $f$ on $M$ that (see \cite{Chen-Kumagai-Wang-2021-MAIN}): 
\[
P_{t}f\left(x\right)=\mathbb{E}_{x}\left[f\left(X_{t}\right)\right], \quad x\in M\setminus \mathcal N.
\]
Recall that a set $\mathcal{N}$ is \textbf{properly exceptional} if  $\mathbb{P}_{x}\left(X_{t}\in\mathcal{N}\text{ for some }t>0\right)=0$
for all $x\in M\backslash\mathcal{N}$.  
Throughout this paper, many of our discussions and conclusions are made outside of a properly exceptional set, which is not explicitly specified. For simplicity of notation, unless otherwise stated, we will fix a process $\{X_{t}\}_{t\geq 0}$ and a set $\mathcal N$ and denote $M_0=M\backslash\mathcal{N}$.

The {heat kernel} $p\left(t,x,y\right)$, $x,y\in M$, $t\ge0$ of the heat semigroup $\left\{ P_{t}\right\} _{t\geq0}$, if exists, is a non-negative, measurable function on $\R_{\ge0}\times M_0\times M_0$ such that for any $f\in L^{\infty}\left(M,\mu\right)$ we have 
\begin{equation}\label{eq:heat-kernel-def}
P_{t}f\left(x\right)=\int_{M}f\left(y\right)p\left(t,x,y\right)d\mu\left(y\right)
\end{equation}
for all $x\in M_0$. It satisfies properties
such as Markovian, symmetry, Chapman-Kolmogorov and approximation
of identity properties. (See \cite{Chen-Kumagai-Wang-2021-MAIN} for more details). 

\begin{assumption}\label{assump}
Let $\left(M,d,\mathcal{E},\mathcal{F},\mu\right)$ be a Dirichlet metric measure space, and let $\left\{ X_{t}\right\} _{t\geq0}$ be a {Hunt process}  associated with it. We impose the following assumptions:
\begin{itemize}
    \item [(a)] (\textbf{Volume doubling property} $(VD)$). There exists a constant
$C_{\mu}\geq1$ such that for all $x\in M$ and $r>0$, $V\left(x,2r\right)\leq C_{\mu}V\left(x,r\right)$.
This condition is equivalent to the existence of
a (possibly different) $C_{\mu}\geq1$ and $d_{1}>0$ such that for
all $x\in M$ and $0<r\leq R$, 
\begin{equation}\label{eq:VD-regularity}
\frac{V\left(x,R\right)}{V\left(x,r\right)}\leq C_{\mu}\left(\frac{R}{r}\right)^{d_{1}}.
\end{equation}

\item [(b)] (\textbf{Reverse volume doubling} $(RVD)$). There exists constants $d_{2},c_{\mu}>0$ such that
for all $x\in M$ and $0<r\leq R$, 
\[
c_{\mu}\left(\frac{R}{r}\right)^{d_{2}}\leq\frac{V\left(x,R\right)}{V\left(x,r\right)}.
\]

\item [(c)]  (\textbf{Heat kernel bound} \textbf{condition} $HK\left(\phi\right)$). The processes $\{X_{t}\}_{t\geq0}$ admits a {heat kernel} $p\left(t,x,y\right):M\times M\to\left(0,\infty\right)$ satisfying that: there exist $c_1, c_2>0$ such that for
all $t>0$ and all $x,y\in M\setminus \mathcal{N}$, we have
\begin{align}\label{eq:HK}
 & c_{1}\left(\frac{1}{V\left(x,\phi^{-1}\left(t\right)\right)}\wedge\frac{t}{V\left(x,d\left(x,y\right)\right)\phi\left(d\left(x,y\right)\right)}\right)\nonumber \\
 & \leq p\left(t,x,y\right)\leq c_{2}\left(\frac{1}{V\left(x,\phi^{-1}\left(t\right)\right)}\wedge\frac{t}{V\left(x,d\left(x,y\right)\right)\phi\left(d\left(x,y\right)\right)}\right),
\end{align} 
where $\phi:\mathbb{R}_{+}\to\mathbb{R}_{+}$
is a strictly increasing continuous function satisfying $\phi\left(0\right)=0$,
 $\phi\left(1\right)=1$ and  
\begin{equation}\label{eq:Phi-regularity}
c\left(\frac{R}{r}\right)^{\alpha_{1}}\leq\frac{\phi\left(R\right)}{\phi\left(r\right)}\leq c'\left(\frac{R}{r}\right)^{\alpha_{2}},\,\,\,\,\,\,\,\,\text{for all }0<r\leq R
\end{equation}
for some $c,c'>0$ and $\alpha_{2}\geq\alpha_{1}$. In this case the inverse function $\phi^{-1}$ satisfies
\begin{equation}\label{eq:Phi-regularity3}
c_{3}\left(\frac{T}{t}\right)^{1/\alpha_{2}}\leq\frac{\phi^{-1}\left(T\right)}{\phi^{-1}\left(t\right)}\leq c_{4}\left(\frac{T}{t}\right)^{1/\alpha_{1}},\,\,\,\,\text{for all }0<t\leq T.
\end{equation}
\end{itemize}
\end{assumption}

\begin{rem}
Note that under Assumption \ref{assump}, by \cite[Theorem 1.13]{Chen-Kumagai-Wang-2021-MAIN} the heat kernel condition $HK\left(\phi\right)$ implies the existence of a
non-negative symmetric function $J\left(x,y\right)$ such that for $\mu\times\mu$
a.e. $x,y\in M$ 
\[
J\left(dx,dy\right)=J\left(x,y\right)\mu\left(dx\right)\mu\left(dy\right),
\]
and additionally the existence of constants $c_5,c_6>0$ such that 
\[
\frac{c_{5}}{V\left(x,d\left(x,y\right)\right)\phi\left(d\left(x,y\right)\right)}\leq J\left(x,y\right)\leq\frac{c_{6}}{V\left(x,d\left(x,y\right)\right)\phi\left(d\left(x,y\right)\right)}.
\]   
\end{rem}

This set of assumptions is standard for studying jump processes on general metric measure spaces, see \cite{Chen-Kumagai-Wang-2021-MAIN} for instance. It covers a wide range of jump processes in many interesting metric measure settings. See \cite[Chapter 6]{Chen-Kumagai-Wang-2021-MAIN} for examples of various subordinated processes of the form $X_t=Z_{S_t}$ where $S_{t}$, $t\ge0$ is an $\frac{\alpha}{2}$-stable subordinator independent of the diffusion $Z_t$.

An important and classical example is the $\alpha$-stable process on $\R^d$.

\begin{example}
[Symmetric $\alpha$-stable processes on $\mathbb{R}^{d}$]\label{ex-stable-proc}
Let $(X_t)_{t\ge0}$ be an $\alpha$-stable process in $\mathbb{R}^{d}$. It  satisfies the condition $HK\left(\phi\right)$ with $\phi\left(r\right)=r^{\alpha}$ and $(VD)$ with $d_{1}=d_{2}=d$
since $V\left(x,d\right)=\omega_{d}r^{d}$. 
\end{example}





For any non-empty open set $D\subset M$ , we define $C_{0}\left(D\right)$
to be the space of continuous functions with compact support in $D$, equipped with the sup-norm. We define $\mathcal{F}\left(D\right)$ as the closure of $\mathcal{F}\cap C_{0}\left(D\right)$ in the norm of $\mathcal{F}.$ Let $\mathcal{L}_{D}$ be the generator of $\left(\mathcal{E},\mathcal{F}\left(D\right)\right)$
and let $P_{t}^{D}$ be its associated heat semigroup. The \textbf{bottom
of the spectrum} of $\mathcal{L}_{D}$ is defined by 
\[
\lambda\left(D\right):=\inf\text{spec }\left(\mathcal{L}_{D}\right)=\inf_{u\in\mathcal{F}\left(D\right)\backslash\left\{ 0\right\} }\frac{\mathcal{E}\left(u,u\right)}{\left\Vert u\right\Vert _{2}^{2}}.
\]
For the associated Hunt process $(X_t)_{t\ge0}$, we define the \textbf{first exit time} from $D$ as 
\[
\tau_{D}:=\inf\left\{ t>0:X_{t}\notin D\right\}.
\]
We now present our main results on the survival probability of  symmetric jump-processes.
\begin{thm}[Spectral bounds for general domains]
\label{thm:MainTheorem1} Assume
the metric measure space $\left(M,d,\mu\right)$ is endowed with a
regular Dirichlet form $\left(\mathcal{E},\mathcal{F}\right)$, and
satisfies Assumption \ref{assump}. Let $D\subset M$ be an open set satisfying $\lambda\left(D\right)>0$. Then for every $\epsilon\in\left(0,1\right)$
 there exists a constant $C_{\epsilon}>0$ such that 
\begin{align}
\sup_{x\in D\setminus\mathcal{N}}\mathbb{P}_{x}\left(\tau_{D}>t\right) & \leq C_{\epsilon}\exp\left(-\frac{1-\epsilon}{1+\frac{d_1}{4\alpha_1}}\lambda\left(D\right)t\right),\label{eq:MainTheorem1-1}\\
\esup_{x\in D}\mathbb{P}_{x}\left(\tau_{D}>t\right) & \geq e^{-\lambda(D)t},\label{eq:MainTheorem1-1b}
\end{align}
for all $t>0$. Here $\mathcal{N}$ is a properly exceptional set. The constants $d_{1},\alpha_{1}$ are given in $\left(\ref{eq:VD-regularity}\right)$ and $\left(\ref{eq:Phi-regularity}\right)$. 
\end{thm}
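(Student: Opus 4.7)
The statement splits into a lower and an upper bound, which I would prove by very different means. For the lower bound \eqref{eq:MainTheorem1-1b}, my plan is purely spectral--interpolation. The semigroup $(P_t^D)_{t\ge 0}$ is self-adjoint and sub-Markovian on $L^2(D,\mu)$, so the spectral theorem gives $\|P_t^D\|_{2\to 2}=e^{-\lambda(D)t}$, and self-adjointness forces $\|P_t^D\|_{1\to 1}=\|P_t^D\|_{\infty\to\infty}$. Riesz--Thorin interpolation between $L^1$ and $L^\infty$ then yields $\|P_t^D\|_{\infty\to\infty}\ge e^{-\lambda(D)t}$, and because $P_t^D$ is positivity preserving this operator norm is attained on $\mathbf{1}_D$, giving $\esup_{x}\mathbb{P}_x(\tau_D>t)=\|P_t^D\mathbf{1}_D\|_\infty\ge e^{-\lambda(D)t}$.

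For the upper bound \eqref{eq:MainTheorem1-1}, the plan is to transfer the $L^2$-spectral decay $\|P_t^D\|_{2\to 2}\le e^{-\lambda(D) t}$ into pointwise (i.e.\ $L^\infty$) exponential decay by combining it with the on-diagonal heat kernel estimate from $HK(\phi)$. The basic ingredient will be the Davies-type pointwise smoothing bound
\[
|P_s^D f(x)|\le p_D(2s,x,x)^{1/2}\|f\|_2\le \bigl(c_2/V(x,\phi^{-1}(2s))\bigr)^{1/2}\|f\|_2,
\]
whose constant scales like $K(s)\asymp s^{-d_1/(2\alpha_1)}$ by the VD-regularity $V(x,\phi^{-1}(s))\asymp s^{d_1/\alpha_1}$. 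One would like to use the composition $P_{t+r+s}^D=P_s^D P_r^D P_t^D$ in the identity $\mathbb{P}_x(\tau_D>t+r+s)=P_s^D\bigl(P_r^D(P_t^D\mathbf{1}_D)\bigr)(x)$, apply the spectral decay to the middle factor and Davies to the outer one, and optimise the splitting. The central obstruction is that $\mathbf{1}_D\notin L^2(D,\mu)$ when $D$ is unbounded, so the spectral bound does not apply directly to $\mathbf{1}_D$.

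I plan to bypass this obstruction through a self-improvement iteration. First, using $HK(\phi)$ and $\lambda(D)>0$, I would show that $\sup_x\mathbb{E}_x[\tau_D]<\infty$ (an ingredient that also underlies Corollary \ref{cor:Equivalences-intro-general}), which by Markov's inequality and the strong Markov property produces a first, crude exponential bound $u(t):=\esup_x\mathbb{P}_x(\tau_D>t)\le C_0 e^{-\gamma_0 t}$ with some $\gamma_0>0$. Given any such bound $u(t)\le C_n e^{-\gamma_n t}$, the Cauchy--Schwarz bootstrap
\[
\|P_t^D\mathbf{1}_D\|_2^2\le u(t)\int_D P_t^D\mathbf{1}_D\,d\mu = u(t)\,\|P_{t/2}^D\mathbf{1}_D\|_2^2,
\]
iterated finitely many times down to a fixed moderate time at which the $L^2$-norm is already known to be finite (from the crude step), combined with the Davies+spectral composition, produces an improved rate $\gamma_{n+1}=\Phi(\gamma_n)$. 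Iterating, $\gamma_n$ converges to the fixed point of $\Phi$, namely $\lambda(D)/(1+d_1/(4\alpha_1))$; the denominator combines the VD exponent $d_1/(2\alpha_1)$ from $K(s)$ with an extra factor $1/2$ produced by the Cauchy--Schwarz in the bootstrap, giving $d_1/(4\alpha_1)$. The $(1-\epsilon)$ factor in \eqref{eq:MainTheorem1-1} absorbs the polynomial prefactors accumulated across the iteration.

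The hardest step will be precisely the $\mathbf{1}_D\notin L^2$ obstruction, handled by the finite-depth bootstrap above; the main book-keeping challenge is ensuring that the constants $C_n$ grow at worst polynomially under iteration so that the limiting rate is unaffected. A secondary technical issue is the $x$-dependence of $V(x,\phi^{-1}(2s))^{-1/2}$ in the Davies constant; after the essential supremum I would tame this by choosing $s\asymp t^\theta$ for an appropriate $\theta$ and invoking volume doubling (and, where needed, reverse volume doubling at small scales) to obtain uniform-in-$x$ control, absorbing the resulting polynomial-in-$t$ correction into the $\epsilon$ slack of the exponential rate.
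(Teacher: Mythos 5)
Your lower-bound argument via self-adjointness of $P_t^D$ and Riesz--Thorin interpolation is valid and is a genuinely different (arguably more self-contained) route than the paper's, which simply cites \cite[Prop.~3.6]{Mariano-Wang-2022}; both are short, and yours has the merit of being transparent.

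For the upper bound, however, the proposal has a gap that is fatal for exactly the domains this theorem is designed to cover. The central step of your bootstrap,
\begin{equation*}
\|P_t^D\mathbf{1}_D\|_2^2\;\le\; u(t)\int_D P_t^D\mathbf{1}_D\,d\mu \;=\; u(t)\,\|P_{t/2}^D\mathbf{1}_D\|_2^2,
\end{equation*}
iterated ``down to a fixed moderate time at which the $L^2$-norm is already known to be finite,'' is vacuous because $\|P_s^D\mathbf{1}_D\|_2=\infty$ for \emph{every} $s>0$ whenever $\mu(D)=\infty$. Indeed $P_s^D\mathbf{1}_D(x)=\mathbb{P}_x(\tau_D>s)$ does not decay as $x\to\infty$ for unbounded domains with positive bottom of spectrum: in the horn-shaped domains, the wavy strip, and the Swiss cheese example of Section~\ref{sec:ex}, this survival probability has a strictly positive uniform lower bound along an infinite-measure subset. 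Your crude first step gives control on $u(t)=\esup_x\mathbb{P}_x(\tau_D>t)$ (an $L^\infty$ quantity), not on any $L^2$ norm, so the chain never grounds. The same obstruction sinks the ``Davies$+$spectral composition'' $P_{t+r+s}^D=P_s^D P_r^D P_t^D$: you still need $P_t^D\mathbf{1}_D\in L^2$ to apply the spectral decay to the middle factor. The paper's way around this is a spatial, not temporal, localization: it splits $\mathbb{P}_x(\tau_D>t)=\mathbb{P}_x(\tau_D>t,X_t\in B(x,R))+\mathbb{P}_x(\tau_D>t,X_t\notin B(x,R))$. The near piece is rewritten as $\int_D p_D(\epsilon t,x,z)\,P^D_{(1-\epsilon)t}\mathbf{1}_{D\cap B(x,R)}(z)\,d\mu(z)$, and the $L^2$ spectral bound is applied to $\mathbf{1}_{D\cap B(x,R)}$, which \emph{is} in $L^2$ (yielding, via Cauchy--Schwarz and the on-diagonal $HK(\phi)$ bound, the prefactor $(V(x,R)/V(x,\phi^{-1}(2\epsilon t)))^{1/2}$). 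The far piece is controlled by the polynomial tail of the free heat kernel and then self-improved via Lemma~\ref{lem:HK-Survival-bound}, which bounds $p_D(t,x,y)$ by $C\sup_z\mathbb{P}_z(\tau_D>\tfrac{t}{2})\,p(t,x,y)$; optimizing over $R$ at each pass yields a geometric series summing to $1+d_1/(4\alpha_1)$ in the denominator. This localization inside the ball is the idea your plan is missing.

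A secondary concern is circularity in your seeding step: you propose to first show $\sup_x\mathbb{E}_x[\tau_D]<\infty$ from $\lambda(D)>0$, but that implication is Corollary~\ref{cor:Equivalences-gen}, which in the paper is \emph{derived from} the theorem you are proving. For non-local processes the polynomial heat kernel tail makes this implication non-trivial, so unless you supply an independent argument, the chain of deductions is circular. The paper's Proposition~\ref{prop:Prop-Main-Est-1} instead produces the crude exponential bound directly from the localization scheme above, without passing through the mean exit time.
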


Using Theorem \ref{thm:MainTheorem1} along with a point-wise asymptotic lower bound we obtain the following
\begin{thm}\label{Thm2}
Assume the Dirichlet metric measure space $\left(M,d,\left(\mathcal{E},\mathcal{F}\right), \mu\right)$ satisfies Assumption \ref{assump}. For any open set $D\subset M$ we have
\begin{equation}\label{eq:current-best-asymp}
\limsup_{t\to\infty}\frac{\log\mathbb{P}_{x}\left(\tau_{D}>t\right)}{t}\leq-\frac{\lambda\left(D\right)}{1+\frac{d_{1}}{4\alpha_{1}}}
\end{equation}
for any $x\in D\backslash \mathcal{N}$. 
Suppose further that $D\subset M$ is a domain such that the heat
kernel $p_{D}$ of $\left(\mathcal{E},\mathcal{F}\left(D\right)\right)$ exists, is continuous $\left(0,\infty\right)\times D\times D$ and strictly positive on $D\times D$. If
$\lambda\left(D\right)>0$, then 
\begin{equation}\label{eq:MainTheorem1-2}
-\lambda\left(D\right)\leq\liminf_{t\to\infty}\frac{\log\mathbb{P}_{x}\left(\tau_{D}>t\right)}{t} \quad\text{for all $x\in D$.}
\end{equation}
 \end{thm}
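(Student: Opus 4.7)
The upper bound in (\ref{eq:current-best-asymp}) is immediate from Theorem \ref{thm:MainTheorem1}: taking the logarithm of (\ref{eq:MainTheorem1-1}), dividing by $t$, sending $t\to\infty$ and then $\epsilon\to 0$ yields the claim. The lower bound (\ref{eq:MainTheorem1-2}) is the substantive part. The strategy is to combine the pointwise inequality $\mathbb{P}_x(\tau_D>t)\geq P_t^D g(x)/\|g\|_\infty$, valid for any bounded nonnegative $g$ by Markov killing, with an $L^2$ lower bound on $P_t^D g$ from the spectral theorem, and to bridge the two using the continuity and strict positivity of $p_D$ on $D\times D$.

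Fix $\epsilon>0$. Since $\lambda(D)=\inf\mathrm{spec}(\mathcal{L}_D)$, the spectral projection $\Pi_\epsilon := E_{\mathcal{L}_D}([\lambda(D),\lambda(D)+\epsilon])$ on $L^2(D,\mu)$ is nonzero. A standard density argument, splitting any $f$ with $\Pi_\epsilon f\neq 0$ into its positive and negative parts and truncating to an exhausting sequence of compact subsets of $D$, produces a bounded nonnegative $g\in L^2(D,\mu)$ supported on a compact set $K\subset D$ with $\Pi_\epsilon g\neq 0$. Functional calculus then gives
\[
\|P_t^D g\|_2^2 \;\geq\; \|P_t^D \Pi_\epsilon g\|_2^2 \;\geq\; \|\Pi_\epsilon g\|_2^2\, e^{-2t(\lambda(D)+\epsilon)}.
\]
Using the identity $\|P_t^D g\|_2^2 = \langle P_{2t}^D g, g\rangle$ together with $g\leq\|g\|_\infty \mathbf{1}_K$, we obtain
\[
\int_K P_{2t}^D g\, d\mu \;\geq\; \frac{\|\Pi_\epsilon g\|_2^2}{\|g\|_\infty}\, e^{-2t(\lambda(D)+\epsilon)}.
\]

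To convert this integrated bound into a pointwise bound at $x\in D$, the hypothesis that $p_D$ is continuous and strictly positive on $D\times D$ implies $c(x,K):=\inf_{y\in K} p_D(1,x,y)>0$, so by the semigroup property
\[
P_{2t+1}^D g(x) \;=\; \int_D p_D(1,x,y)\, P_{2t}^D g(y)\, d\mu(y) \;\geq\; c(x,K)\int_K P_{2t}^D g\, d\mu \;\geq\; C(x,\epsilon)\, e^{-2t(\lambda(D)+\epsilon)}.
\]
Combining with $\mathbb{P}_x(\tau_D>2t+1)\geq P_{2t+1}^D g(x)/\|g\|_\infty$, taking logarithms, dividing by $2t+1$, letting $t\to\infty$ and then $\epsilon\to 0$ yields (\ref{eq:MainTheorem1-2}). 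The main obstacle lies precisely in the conversion of the integrated spectral estimate into a pointwise bound at an arbitrary $x$: the assumption that $p_D$ is continuous and strictly positive on $D\times D$ is what makes this step possible, by allowing a single application of the semigroup identity to transport mass from the compact set $K$ to the point $x$.
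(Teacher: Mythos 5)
Your proof is correct, and the upper bound argument is exactly the paper's (trivial passage from \eqref{eq:MainTheorem1-1} to the limsup). For the lower bound, your route is genuinely different from the paper's, though the two share the same skeleton: produce a bounded, nonnegative, compactly supported approximate eigenfunction $g$, extract an $L^2$ lower bound of order $e^{-t(\lambda(D)+\epsilon)}$ for $\langle P_t^D g, g\rangle$, and then use continuity and strict positivity of $p_D$ (together with compactness of $\mathrm{supp}\,g$) to transport that integrated bound to a pointwise lower bound on $P_s^D 1(x)=\mathbb{P}_x(\tau_D>s)$ via one application of Chapman--Kolmogorov. Where you diverge is the construction of $g$ and the $L^2$ step. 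The paper starts from the variational characterization of $\lambda(D)$: it picks $f_\epsilon\in\mathcal{F}\cap C_0(D)$ with $\|f_\epsilon\|_2=1$ and $\mathcal{E}(f_\epsilon,f_\epsilon)\le\lambda(D)+\epsilon$, replaces $f_\epsilon$ by $|f_\epsilon|$ using the Markovian property $\mathcal{E}(|f_\epsilon|,|f_\epsilon|)\le\mathcal{E}(f_\epsilon,f_\epsilon)$, and then applies Jensen's inequality to the spectral measure of $|f_\epsilon|$ to get $e^{-t\mathcal{E}(|f_\epsilon|,|f_\epsilon|)}\le\langle P_t^D|f_\epsilon|,|f_\epsilon|\rangle$. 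Because test functions in $C_0(D)$ already have compact support and are bounded, no truncation is needed. You instead work with the spectral projection $\Pi_\epsilon$ and a density/truncation argument to manufacture $g$. This is valid (bounded compactly supported functions are dense in $L^2(D,\mu)$, the projection is continuous, and $\Pi_\epsilon f\ne0$ forces $\Pi_\epsilon f^+\ne 0$ or $\Pi_\epsilon f^-\ne0$), and it bypasses both the Markov-property step and the Jensen step, at the price of the small approximation argument you sketch. One stylistic remark: the paper packages the transport step as a self-contained lemma (Lemma \ref{lem:Asymp-Lower-2}), bounding $\langle P_t^D f,f\rangle$ above by a multiple of $P_{t+T}^D 1(x)$; you instead bound $P_{2t+1}^D g(x)$ below by a multiple of $\int_K P_{2t}^D g\,d\mu$. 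These are two sides of the same estimate, and both give the stated liminf after letting $\epsilon\to 0$.
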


To our knowledge,  the results in \eqref{eq:MainTheorem1-1} and \eqref{eq:current-best-asymp} provide the best known upper bounds for the asymptotic $\mathbb{P}_{x}\left(\tau_{D}>t\right)$ in terms of $\lambda (D)$ for general unbounded domains.  These bounds are optimal within the framework of our approach, which is based on a self-improving iteration technique that departs significantly from methods used in the local case. 

As a nice consequence, we obtain the following equivalence, which can be used to characterize a positive bottom of spectrum. For related results in the local setting,  see \cite{Banuelos-Carrol1994,Giorgi-Smits-2010,Vandenberg-Carroll-2009,Banuelos-Mariano-Wang-2024,Mariano-Wang-2022}. 
\begin{corollary}
\label{cor:Equivalences-gen}Assuming the hypothesis of Theorem \ref{thm:MainTheorem1},
for any $D\subset\mathbb{R}^{d}$, we have 
\begin{equation}
\lambda\left(D\right)>0\iff\sup_{x\in D}\mathbb{E}_{x}\left[\tau_{D}\right]<\infty.\label{eq:lambda-pos-1-gen}
\end{equation}
\end{corollary}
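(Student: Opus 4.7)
The forward direction is almost immediate. Assume $\lambda(D)>0$. Fix $\epsilon\in(0,1)$ and apply Theorem \ref{thm:MainTheorem1} to obtain a constant $C_\epsilon>0$ and a rate $c:=\tfrac{1-\epsilon}{1+d_1/(4\alpha_1)}\lambda(D)>0$ such that
\[
\sup_{x\in D\setminus\mathcal{N}}\mathbb{P}_x(\tau_D>t)\le C_\epsilon e^{-ct},\qquad t>0.
\]
Integrating the tail identity $\mathbb{E}_x[\tau_D]=\int_0^\infty\mathbb{P}_x(\tau_D>t)\,dt$ and taking the supremum gives $\sup_{x\in D\setminus\mathcal N}\mathbb{E}_x[\tau_D]\le C_\epsilon/c<\infty$.

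For the reverse direction, the plan is to run a Khas\cprime minskii-type self-improvement argument and then pass from an $L^\infty$ bound on the Dirichlet semigroup to the spectral gap. Assume $K:=\sup_{x\in D}\mathbb{E}_x[\tau_D]<\infty$. Markov's inequality gives $\sup_{x}\mathbb{P}_x(\tau_D>2K)\le 1/2$. Applying the strong Markov property at time $2K$ yields, for every $t>0$,
\[
\sup_{x\in D}\mathbb{P}_x(\tau_D>t+2K)\le\sup_{x\in D}\mathbb{P}_x(\tau_D>2K)\cdot\sup_{y\in D}\mathbb{P}_y(\tau_D>t)\le\tfrac12\sup_{y\in D}\mathbb{P}_y(\tau_D>t).
\]
Iterating this gives $M(t):=\sup_{x\in D}\mathbb{P}_x(\tau_D>t)\le C e^{-\beta t}$ with $\beta:=\log 2/(2K)>0$.

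To convert this tail bound into a spectral estimate, observe that $|P_t^D u(x)|\le \|u\|_\infty\,\mathbb{P}_x(\tau_D>t)$, hence $\|P_t^D\|_{L^\infty\to L^\infty}\le M(t)$; by the symmetry of $(\mathcal E,\mathcal F(D))$ the dual bound $\|P_t^D\|_{L^1\to L^1}\le M(t)$ also holds, and Riesz--Thorin interpolation gives $\|P_t^D\|_{L^2\to L^2}\le M(t)\le C e^{-\beta t}$. On the other hand, spectral calculus for the self-adjoint semigroup $P_t^D=e^{-t\mathcal L_D}$ gives $\|P_t^D\|_{L^2\to L^2}=e^{-\lambda(D)t}$. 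Combining, $e^{-\lambda(D)t}\le C e^{-\beta t}$ for all $t>0$, and letting $t\to\infty$ yields $\lambda(D)\ge\beta>0$.

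The main obstacle, such as it is, lies in the reverse direction: one must resist the temptation to work directly with the variational definition of $\lambda(D)$, which is awkward because we have no candidate test functions, and instead convert the probabilistic hypothesis into an $L^\infty\to L^\infty$ bound on $P_t^D$. The two ingredients that make this clean are (i) the Khas\cprime minskii-style iteration, which upgrades a single uniform bound $\mathbb{P}_x(\tau_D>2K)\le 1/2$ into exponential decay, and (ii) the interpolation step, which is legitimate thanks to symmetry of the Dirichlet form in \eqref{eq-Diri}. Properly exceptional sets cause no difficulty since both $\tau_D$ and the heat semigroup are defined on $D\setminus\mathcal N$ and $\mathcal N$ is $\mu$-null.
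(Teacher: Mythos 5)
Your forward direction is identical to the paper's: apply the exponential tail bound from Theorem \ref{thm:MainTheorem1} and integrate $\mathbb{P}_x(\tau_D>t)$ over $t$. Your reverse direction is correct but takes a genuinely different route: the paper simply invokes the known inequality $\lambda(D)\ge 1/\sup_{x\in D}\mathbb{E}_x[\tau_D]$ with a pointer to the literature, whereas you reconstruct a proof from scratch via a Khas\cprime minskii iteration (Markov at time $2K$, strong Markov, geometric decay) followed by $L^\infty\to L^\infty$, symmetry/duality, Riesz--Thorin, and spectral calculus. Your argument is sound and self-contained, which is a virtue, but it yields the weaker numerical bound $\lambda(D)\ge \log 2/(2K)$ rather than $\lambda(D)\ge 1/K$. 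The sharper constant can be recovered by a variant of the same idea applied directly to the Green operator $G^D=\int_0^\infty P_t^D\,dt$: since $G^D\mathbf 1(x)=\mathbb{E}_x[\tau_D]$, one has $\|G^D\|_{L^\infty\to L^\infty}=K$, and the same symmetry-plus-interpolation step gives $\|G^D\|_{L^2\to L^2}\le K$; as $G^D=\mathcal L_D^{-1}$ with $\|\mathcal L_D^{-1}\|_{L^2\to L^2}=1/\lambda(D)$, this gives $\lambda(D)\ge 1/K$ without first passing through an exponential tail estimate. Either way, the equivalence is established, so the proposal is a valid (if slightly less sharp) alternative to the paper's appeal to the literature.
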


\section{Proof of Theorem \ref{thm:MainTheorem1}}\label{sec3}

First, we address the existence of the Dirichlet heat kernel $p_D$.

Suppose we assume all of Assumption \ref{assump}. By \cite[Prop. 7.6]{Chen-Kumagai-Wang-2021-MAIN} we have that $(VD),$ $ (RVD)$, $(\ref{eq:Phi-regularity})$ and  $HK\left(\phi\right)$ implies a 
 Faber-Krahn inequality. By \cite[Prop. 7.7]{Chen-Kumagai-Wang-2021-MAIN}, under $(VD)$, $(\ref{eq:Phi-regularity})$ and a Faber-Krahn inequality implies that there exists a properly exceptional set $\mathcal{N}\subset M$
such that, for any open set $D\subset M$, the semigroup $\left\{ P_{t}^{D}\right\} $
admits a heat kernel $p_{D}\left(t, x,y\right)$ defined on $D\backslash\mathcal{N}\times D\backslash\mathcal{N}$.
We extend $p_{D}$ to $D\times D$ by defining $p_{D}=0$ on $\left(D\times D\right)\backslash\left(D\backslash\mathcal{N}\times D\backslash\mathcal{N}\right)$. 

Next, we observe that the lower bound $(\ref{eq:MainTheorem1-1b})$ follows directly from  \cite[Prop.
3.6]{Mariano-Wang-2022}. The rest of this section is  then dedicated to proving the upper bound $(\ref{eq:MainTheorem1-1})$, which we divide into two subsections. In Section \ref{sec-prel}, we establish a preliminary upper bound estimate on the survival probability. This estimate will then be refined in Section \ref{sec-refine} using an iteration technique.

\subsection{A Preliminary Estimate}\label{sec-prel}

The Proposition below establish an initial upper bound for the survival probability. This estimate serves as a starting point and will be improved in the next subsection.

\begin{prop}\label{prop:Prop-Main-Est-1}
Under the same assumptions as in Theorem \ref{thm:MainTheorem1}, there exists a constant $C>0$
such that for every $\epsilon\in\left(0,1\right)$ and every open set
$D\subset M$ with $\lambda\left(D\right)>0$, we have that
\begin{equation}\label{eq:Prop-Main-Est-1}
\sup_{x\in D\setminus \mathcal{N}}\mathbb{P}_{x}\left(\tau_{D}>t\right)\leq C\left(1+\frac{1}{\epsilon}\right)\exp\left(-\frac{1-\epsilon}{1+\frac{d_{1}}{2\alpha_{1}}}\lambda\left(D\right)t\right).
\end{equation}
where $\mathcal{N}$ is a properly exceptional set.
\end{prop}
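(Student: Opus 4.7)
The plan is to express $\mathbb{P}_x(\tau_D > t) = \int_D p_D(t,x,y)\, d\mu(y)$ and to split this integral into a ``near'' contribution over $D \cap B(x,R)$ and a ``far'' contribution over $D \setminus B(x,R)$, optimizing over a radius $R > 0$ and an auxiliary time $s \in (0,t)$ at the end.

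For the near piece I would apply Cauchy--Schwarz together with the identity $\|p_D(t,x,\cdot)\|_{L^2(D)}^2 = p_D(2t,x,x)$ (from symmetry and Chapman--Kolmogorov) to obtain
\[
\int_{D \cap B(x,R)} p_D(t,x,y)\, d\mu(y) \leq V(x,R)^{1/2}\, \sqrt{p_D(2t,x,x)}.
\]
Writing $p_D(t,x,\cdot) = P_{t-s}^D p_D(s,x,\cdot)$ and using $\|P_r^D\|_{L^2 \to L^2} \leq e^{-\lambda(D) r}$ yields the spectrally-improved diagonal bound $p_D(2t,x,x) \leq e^{-2\lambda(D)(t-s)}\, p_D(2s,x,x)$, while the on-diagonal part of $HK(\phi)$ gives $p_D(2s,x,x) \leq p(2s,x,x) \leq C/V(x,\phi^{-1}(2s))$. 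Combined with $(VD)$ this bounds the near-region contribution by $C\bigl(R/\phi^{-1}(2s)\bigr)^{d_1/2} e^{-\lambda(D)(t-s)}$ (once $R \geq \phi^{-1}(2s)$, which will be automatic at the optimum). For the far piece, $p_D \leq p$ together with the off-diagonal part of $HK(\phi)$ and a standard dyadic-shell argument, using $(VD)$ and the lower scaling of $\phi$ in \eqref{eq:Phi-regularity}, give
\[
\int_{D \setminus B(x,R)} p_D(t,x,y)\, d\mu(y) \leq C t \int_{d(x,y) > R} \frac{d\mu(y)}{V(x,d(x,y))\,\phi(d(x,y))} \leq \frac{C t}{\phi(R)}.
\]

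Combining these bounds,
\[
\mathbb{P}_x(\tau_D > t) \leq C\bigl(R/\phi^{-1}(2s)\bigr)^{d_1/2} e^{-\lambda(D)(t-s)} + \frac{C t}{\phi(R)},
\]
I would then set $s = \epsilon t$ and choose $R$ to equalize the two terms. Using $\phi(uR) \asymp u^{\alpha_1}\phi(R)$ from \eqref{eq:Phi-regularity}, a direct computation gives the exponential rate $\frac{1-\epsilon}{1 + d_1/(2\alpha_1)}\lambda(D)$ with prefactor of order $\epsilon^{-d_1/(d_1+2\alpha_1)}$; since this exponent lies in $(0,1)$, the prefactor is bounded by $C(1 + 1/\epsilon)$, producing \eqref{eq:Prop-Main-Est-1}. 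All constants depend only on the structural parameters from Assumption \ref{assump}, so the bound is uniform in $x \in D \setminus \mathcal{N}$.

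The main difficulty is that $1 \notin L^2(D)$ when $D$ is unbounded, so the spectral bound $\|P_t^D f\|_2 \leq e^{-\lambda(D) t}\|f\|_2$ cannot be applied directly to the constant function that represents the survival probability. The Cauchy--Schwarz truncation against $B(x,R)$ is the key device: it localizes the estimate and replaces the missing $L^2$-norm of $1$ by the finite quantity $V(x,R)^{1/2}$, at the cost of controlling the complementary integral through the polynomial tails of the non-local heat kernel. This is also the source of the suboptimal exponent $\frac{1}{1+d_1/(2\alpha_1)}$; the sharper denominator $1 + d_1/(4\alpha_1)$ appearing in Theorem \ref{thm:MainTheorem1} will be produced in Section \ref{sec-refine} by reinjecting the present bound into the diagonal estimate for $p_D(2s,x,x)$ and iterating.
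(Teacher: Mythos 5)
Your proposal is correct and follows essentially the same route as the paper's proof: the same near/far decomposition at radius $R$, the same use of Cauchy--Schwarz together with the $L^2$ spectral bound $\lVert P^D_r\rVert_{2\to 2}\le e^{-\lambda(D)r}$ and the on-diagonal bound $p_D(2s,x,x)\le C/V(x,\phi^{-1}(2s))$ for the near piece, the same tail estimate $\int_{B(x,R)^c}p\,d\mu\le Ct/\phi(R)$ for the far piece, and the same optimization giving $A=\frac{1-\epsilon}{1+d_1/(2\alpha_1)}$. The only difference is cosmetic: you apply Cauchy--Schwarz directly to $\int_{B(x,R)}p_D(t,x,\cdot)$ and then push the spectral decay through $p_D(2t,x,x)\le e^{-2\lambda(D)(t-s)}p_D(2s,x,x)$, whereas the paper first uses Chapman--Kolmogorov to write $I=\int p_D(\epsilon t,x,\cdot)\,P^D_{(1-\epsilon)t}\mathbf 1_{B(x,R)}$ and then applies Cauchy--Schwarz — with $s=\epsilon t$ the two chains of inequalities produce the identical bound.
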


\begin{proof}
Fix an $R>0$ (to be chosen later). For any $x\in D\backslash\mathcal{N}$, we split the probability as follows,
\begin{align}\label{eq:BoundTheorem-0}
\mathbb{P}_{x}\left(\tau_{D}>t\right) & =\mathbb{P}_{x}\left(\tau_{D}>t,X_{t}\in B\left(x,R\right)\right)+\mathbb{P}_{x}\left(\tau_{D}>t,X_{t}\notin B\left(x,R\right)\right)\nonumber \\
 & =:I+II.
\end{align}
\textbf{Step 1:} Estimate on $I$. 
Recall the heat semigroup $P_{t}^{D}$ associated to $(\mathcal{E},\mathcal{F}(D))$. Outside of a properly exceptional set $\mathcal{N}$ it satisfies  $P_{t}^{D}f(x)=\mathbb{E}_{x}\left[f\left(X_{t}\right),\tau_{D}>t\right]$ for all $f\in \mathcal{F}(D)$ and for all $x\in M\setminus \mathcal{N}$. Let $p_{D}\left(t, x,y\right)$ be the associated heat kernel as previously specified. We can then write
\begin{align}
I & =\int_{M}1_{D\cap B\left(x,R\right)}\left(y\right)p_{D}\left(t,x,y\right)d\mu\left(y\right)
\end{align}
Applying Chapman-Kolmogorov identity and Fubini's theorem we have
\begin{align}\label{eq-I-est-1}
    I 
 & =\int_{M}1_{D\cap B\left(x,R\right)}\left(y\right)\int_{D}p_{D}\left(\epsilon t,x,z\right)p_{D}\left(\left(1-\epsilon\right)t,z,y\right)d\mu(z)d\mu\left(y\right)\nonumber \\
 & =\int_{D}p_{D}\left(\epsilon t,x,z\right)P_{\left(1-\epsilon\right)t}^{D}1_{D\cap B\left(x,R\right)}\left(z\right)d\mu(z). 
\end{align}
By the spectral theorem we know that the  semigroup operator $P_{\left(1-\epsilon\right)t}^{D}$ has the $L^2$-operator norm bound:
\begin{equation}\label{eq:thm:HK-simplest-L2spectral-1}
\left\Vert P_{\left(1-\epsilon\right)t}^{D} 1_{D\cap B\left(x,R\right)} \right\Vert _{2}\leq e^{-\lambda\left(D\right)\left(1-\epsilon\right)t}\left\Vert 1_{D\cap B\left(x,R\right)}\right\Vert _{2}.
\end{equation}
Using the Cauchy--Schwarz inequality on \eqref{eq-I-est-1} and plugging in \eqref{eq:thm:HK-simplest-L2spectral-1} we obtain for any $\epsilon\in\left(0,1\right)$ that
\begin{align}\label{eq:BoundTheorem-1}
I  & \leq\left(\int_{D}p_{D}\left(\epsilon t,x,z\right)^{2}d\mu(z)\right)^{1/2}e^{-\lambda\left(D\right)\left(1-\epsilon\right)t}\left\Vert 1_{D\cap B\left(x,R\right)}\right\Vert _{2}.
\end{align}
Moreover, note that
\begin{align*}
    \int_{D}p_{D}\left(\epsilon t,x,z\right)^{2}d\mu(z)
    &\le \esup_{z\in D}p_{D}\left(\epsilon t,x,z\right)\int_{D}p_{D}\left(\epsilon t,x,z\right)d\mu(z)
     \le \frac{C}{V\left(x,\phi^{-1}\left(2\epsilon t\right)\right)}
\end{align*}
where the last inequality follows from the fact that $p_D(t,x,z)\le p_M(t, x,z)$ for all $t\ge0$, $x,z\in D$, and that $p_M$ satisfies the $HK(\phi)$ condition. Therefore  \eqref{eq:BoundTheorem-1} becomes
\begin{align*}
I 
 \leq 
 C\left(\frac{V\left(x,R\right)}{V\left(x,\phi^{-1}\left(2\epsilon t\right)\right)}\right)^{1/2}e^{-\lambda\left(D\right)\left(1-\epsilon\right)t}.
\end{align*}
Let $R\geq \phi^{-1}\left(2\epsilon t\right)$. Applying $(VD)$ we then obtain 
\begin{align}\label{eq:Theorem-proof-I-bound}
I & \leq C\left(1+\left(\frac{R}{\phi^{-1}\left(2\epsilon t\right)}\right)^{d_{1}/2}\right)e^{-\lambda\left(D\right)\left(1-\epsilon\right)t},
\end{align}
where $d_1>0$ is the constant specified as in \eqref{eq:VD-regularity}.
\\
\noindent
\textbf{Step 2:} Estimate on $II$. From \eqref{eq:BoundTheorem-0} we can rewrite II as
\begin{align}
II & =\int_{D\cap B\left(x,R\right)^{c}}p_{D}\left(t,x,y\right)d\mu\left(y\right).\nonumber
\end{align}
Using the fact that $p_{D}\left(t,x,y\right)$ satisfied $HK(\phi)$ in \eqref{eq:HK} we obtain that 
\begin{align}\label{eq:Theorem-proof-II-bound-mid1}
II 
 & \leq c_{2}t\int_{B\left(x,R\right)^{c}}\frac{1}{V\left(x,d\left(x,y\right)\right)\phi\left(d\left(x,y\right)\right)}d\mu\left(y\right)
\end{align}
Moreover, since $\phi$ satisfies $(\ref{eq:Phi-regularity})$, it follows from \cite[Lemma 2.1]{Chen-Kumagai-Wang-2021-MAIN} that 
there exists a constant $c_{0}>0$ such that for any $x\in M$ and $R>0$,
\[
\int_{B\left(x,R\right)^{c}}\frac{1}{V\left(x,d\left(x,y\right)\right)\phi\left(d\left(x,y\right)\right)}d\mu\left(y\right)\leq\frac{c_{0}}{\phi\left(R\right)}
\]
Plugging this into \eqref{eq:Theorem-proof-II-bound-mid1} we obtain for some $C>0$ that 
\begin{align}\label{eq:Theorem-proof-II-bound}
II  \leq 
 \frac{Ct}{\phi\left(R\right)}.
\end{align}
\textbf{Step 3:} Optimization of bounds for $I$ and $II$. Combining $(\ref{eq:Theorem-proof-I-bound})$
and $(\ref{eq:Theorem-proof-II-bound})$ we have that

\begin{align}\label{eq:BoundTheorem-3}
\mathbb{P}_{x}\left(\tau_{D}>t\right) & \leq C\left(1+\left(\frac{R}{\phi^{-1}\left(2\epsilon t\right)}\right)^{d_{1}/2}\right)e^{-\lambda\left(D\right)\left(1-\epsilon\right)t}+\frac{Ct}{\phi\left(R\right)}.
\end{align}

We then set $R=\phi^{-1}\left(2\epsilon te^{At\lambda(D)}\right)>\phi^{-1}\left(2\epsilon t\right)$, where $A>0$ is a constant to be determined later. The above estimate  becomes
\begin{align*}
\mathbb{P}_{x}\left(\tau_{D}>t\right) & \leq C\left(1+\left(\frac{\phi^{-1}\left(2\epsilon te^{At\lambda(D)}\right)}{\phi^{-1}\left(2\epsilon t\right)}\right)^{d_{1}/2}\right)e^{-\lambda\left(D\right)\left(1-\epsilon\right)t}+\frac{C}{2\epsilon}e^{-A\lambda(D)t}.
\end{align*}
Since $2\epsilon t\leq2\epsilon te^{At\lambda(D)}$, resorting to $(\ref{eq:Phi-regularity3})$ we obtain that
\begin{align}\label{eq:BoundTheorem-4}
\mathbb{P}_{x}\left(\tau_{D}>t\right) & \leq 
 C\left(1+e^{\frac{Ad_{1}}{2\alpha_{1}}\lambda(D)t}\right)e^{-\lambda\left(D\right)\left(1-\epsilon\right)t}+\frac{C}{2\epsilon}e^{-A\lambda(D)t}.
\end{align}
Now choose $A$ such that $\frac{Ad_{1}}{2\alpha_{1}}-\left(1-\epsilon\right)=-A$, which yields $A=\frac{1-\epsilon}{1+\frac{d_{1}}{2\alpha_{1}}}$. Plugging this into $(\ref{eq:BoundTheorem-4})$ we obtain
\begin{align*}
\mathbb{P}_{x}\left(\tau_{D}>t\right) & \leq Ce^{-\frac{1-\epsilon}{1+\frac{d_{1}}{2\alpha_{1}}}\lambda(D)t}+\frac{C}{2\epsilon}e^{-\frac{1-\epsilon}{1+\frac{d_{1}}{2\alpha_{1}}}\lambda(D)t}\\
 & \leq C\left(1+\frac{1}{\epsilon}\right)e^{-\frac{1-\epsilon}{1+\frac{d_{1}}{2\alpha_{1}}}\lambda(D)t}
\end{align*}
as desired. 
\end{proof}
\subsection{Proof of the upper bound \eqref{eq:MainTheorem1-1}}\label{sec-refine}
In this section, we refine the upper bound obtained in Proposition \ref{prop:Prop-Main-Est-1} by applying an iteration technique.
We begin with a bound on the Dirichlet heat kernel $p_{D}\left(t,x,y\right)$
in terms of the free heat kernel $p\left(t,x,y\right)$ and the survival
probability $\sup_{z\in D}\mathbb{P}_{z}\left(\tau_{D}>\frac{t}{2}\right)$, as stated in the lemma below. This is a refinement of \cite[Lemma 3.4]{Siudeja-2006}. 
\begin{lemma}\label{lem:HK-Survival-bound}
Assume the Dirichlet metric measure space $\left(M,d,\left(\mathcal{E},\mathcal{F}\right), \mu\right)$ satisfies Assumption \ref{assump}, and let $D\subset M$ be an open set. Then there exists a constant $C>0$, independent of $x,y,t$ and $D$ such that 
\begin{equation}\label{eq:HK-Survival-bound}
p_{D}\left(t,x,y\right)\leq C\sup_{z\in D}\mathbb{P}_{z}\left(\tau_{D}>\frac{t}{2}\right)p\left(t,x,y\right).
\end{equation}
for any $x,y\in D$ and $t>0$.
\end{lemma}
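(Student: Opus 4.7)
The plan is to start from the Chapman--Kolmogorov identity
\begin{equation*}
p_{D}(t,x,y)=\int_{D}p_{D}(t/2,x,z)\,p_{D}(t/2,z,y)\,d\mu(z)
\end{equation*}
and produce the desired bound by pulling one factor out as a multiple of the free heat kernel $p(t,x,y)$ while integrating the other against $\mu$, so that the latter contributes a survival probability $\le\sup_{z\in D}\mathbb{P}_{z}(\tau_{D}>t/2)$. Because the free kernel has the mixed near/far-diagonal form in $HK(\phi)$, a single $L^{\infty}$-in-$z$ bound on $p_{D}(t/2,\cdot,y)$ is too coarse off-diagonal, so the plan is to split into two cases according to whether $d(x,y)\le c_{0}\phi^{-1}(t)$ or not, for a suitable constant $c_{0}>0$ to be fixed at the end.

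In the near-diagonal case $d(x,y)\le c_{0}\phi^{-1}(t)$, the upper part of $HK(\phi)$ combined with $p_{D}\le p$ yields
\begin{equation*}
\sup_{z\in D}p_{D}(t/2,z,y)\le \frac{c_{2}}{V(y,\phi^{-1}(t/2))}\le \frac{C}{V(x,\phi^{-1}(t))}\le C\,p(t,x,y),
\end{equation*}
where the middle inequality uses $(VD)$ and the regularity \eqref{eq:Phi-regularity} of $\phi$, and the last step uses the lower part of $HK(\phi)$ which in this regime gives $p(t,x,y)\asymp V(x,\phi^{-1}(t))^{-1}$. Factoring the supremum out of the Chapman--Kolmogorov integral leaves $\int_{D}p_{D}(t/2,x,z)\,d\mu(z)=\mathbb{P}_{x}(\tau_{D}>t/2)$, and the required bound follows.

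In the far-diagonal case $d(x,y)>c_{0}\phi^{-1}(t)$ I would decompose the $z$-integral over $A_{1}:=\{z\in D:d(z,y)\ge d(x,y)/2\}$ and $A_{2}:=D\setminus A_{1}\subseteq\{z\in D:d(x,z)\ge d(x,y)/2\}$, the inclusion coming from the triangle inequality. On $A_{1}$ the off-diagonal part of $HK(\phi)$ together with $(VD)$ and \eqref{eq:Phi-regularity} gives
\begin{equation*}
p_{D}(t/2,z,y)\le p(t/2,z,y)\le \frac{c_{2}\,t/2}{V(z,d(z,y))\,\phi(d(z,y))}\le C\,p(t,x,y),
\end{equation*}
so that $\int_{A_{1}}p_{D}(t/2,x,z)p_{D}(t/2,z,y)\,d\mu(z)\le C\,p(t,x,y)\,\mathbb{P}_{x}(\tau_{D}>t/2)$. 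A symmetric argument on $A_{2}$, using $p_{D}(t/2,x,z)\le p(t/2,x,z)$ together with the symmetry $p_{D}(t/2,z,y)=p_{D}(t/2,y,z)$, yields the analogous bound with $\mathbb{P}_{y}(\tau_{D}>t/2)$ in place of $\mathbb{P}_{x}$. Summing and replacing both survival probabilities by $\sup_{z\in D}\mathbb{P}_{z}(\tau_{D}>t/2)$ produces \eqref{eq:HK-Survival-bound}.

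The main obstacle will be the careful bookkeeping of constants when comparing $p(t/2,z,y)$ or $p(t/2,x,z)$ at the intermediate point $z$ with $p(t,x,y)$, since each of the arguments $t$, $V(\cdot,\cdot)$ and $\phi(\cdot)$ shifts between the two; these comparisons are precisely what $(VD)$, $(RVD)$, and \eqref{eq:Phi-regularity} were designed for. The overall argument is the jump-process analogue of the standard "semigroup plus $L^{\infty}$" trick used for diffusions, with the polynomial tail of $p$ forcing the two-region split and the off-diagonal comparisons above.
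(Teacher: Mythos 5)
Your proof is correct, and it shares the paper's core structure: Chapman--Kolmogorov at time $t/2$, the split of $D$ into $A_1,A_2$ via the triangle inequality, and extraction of the survival probability by pulling out a pointwise bound on one factor. Where you diverge is in adding a preliminary case split on $d(x,y)\lessgtr c_0\phi^{-1}(t)$. The paper avoids that dichotomy entirely: for $z\in A_1$ it bounds $p_D(t/2,x,z)$ by the upper bound of $HK(\phi)$ \emph{keeping the minimum intact}, i.e.\ it shows separately that both $\frac{1}{V(x,\phi^{-1}(t/2))}\lesssim\frac{1}{V(x,\phi^{-1}(t))}$ and $\frac{t/2}{V(x,d(x,z))\phi(d(x,z))}\lesssim\frac{t}{V(x,d(x,y))\phi(d(x,y))}$, so the minimum of the left-hand quantities is automatically dominated by the minimum of the right-hand quantities, which is $\asymp p(t,x,y)$ by the lower bound of $HK(\phi)$. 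Since the correct branch of the minimum is selected automatically, no near/far-diagonal distinction is needed. Your version instead discards one branch of the minimum in each regime and must therefore establish separately that the branch you kept actually matches $p(t,x,y)$ in that regime; this is why you need the threshold $c_0$. Both arguments are valid, but the paper's is tighter. Two smaller differences worth noting: (i) because you bound $p_D(t/2,z,y)$ with the variable point $z$ as the base of the volume, you incur the extra comparison $V(z,d(z,y))\gtrsim V(x,d(x,y))$, which follows from $d(x,z)\le 3\,d(z,y)$ on your $A_1$ together with $(VD)$ -- you correctly flag this as the bookkeeping step, but it would be cleaner to invoke the symmetry $p(t/2,z,y)=p(t/2,y,z)$ and use the fixed point $y$ as base, or to mirror the paper and bound $p_D(t/2,x,z)$ on the set $\{d(x,z)\ge d(x,y)/2\}$ with base $x$; (ii) the paper keeps track of the properly exceptional set $\mathcal N$, writing $\sup_{y\in D\setminus\mathcal N}$, a detail your sketch omits but which is harmless at this level of rigor.
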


\begin{proof}
First, observe from the triangle inequality that for any $x,y,z\in M$, we have
\begin{equation}
\max\left\{ d\left(x,z\right),d\left(z,y\right)\right\} \geq\frac{1}{2}d\left(x,y\right).\label{eq:HK-Survival-bound-1}
\end{equation}
We can then split the domain $D$ as $D=A_{1}\left(x,y\right)\cup A_{2}\left(x,y\right)$
where
\begin{align}\label{eq-A1-A2}
A_{1}\left(x,y\right)  :=\left\{ z\in D:\frac{1}{2}d\left(x,y\right)\leq d\left(x,z\right)\right\}, \quad
A_{2}\left(x,y\right)  :=\left\{ z\in D:\frac{1}{2}d\left(x,y\right)\leq d\left(z,y\right)\right\}.
\end{align}
Using the Chapman-Kolmogorov and the spiting of $D$ above we have 
\begin{align}\label{eq-pD-I-II}
p_{D}\left(t,x,y\right) & =\int_{D}p_{D}\left(\frac{t}{2},x,z\right)p_{D}\left(\frac{t}{2},z,y\right)dz \notag\\
 & \leq\int_{A_{1}\left(x,y\right)}p_{D}\left(\frac{t}{2},x,z\right)p_{D}\left(\frac{t}{2},z,y\right)dz+\int_{A_{2}\left(x,y\right)}p\left(\frac{t}{2},x,z\right)p_{D}\left(\frac{t}{2},z,y\right)dz \notag\\
 & =:I+II.
\end{align}
We first give an estimate for $I$. For any $z\in A_{1}\left(x,y\right)$, using the fact  that $p_{D}\left(\frac{t}{2},x,z\right) \leq p\left(\frac{t}{2},x,z\right)$ and that $p\left(\frac{t}{2},x,z\right)$ satisfies $HK\left(\phi\right)$
in $\left(\ref{eq:HK}\right)$ we get 
\begin{align}\label{eq-pD-mid1}
p_{D}\left(\frac{t}{2},x,z\right) & \leq 
C_{2}\left(\frac{1}{V\left(x,\phi^{-1}\left(\frac{t}{2}\right)\right)}\wedge\frac{\frac{t}{2}}{V\left(x,d\left(x,z\right)\right)\phi\left(d\left(x,z\right)\right)}\right).
\end{align}
Since $z\in A_{1}\left(x,y\right)$ implies that $\frac{1}{2}d\left(x,y\right)\leq d\left(x,z\right)$, also using the fact that  $\phi$ is strictly increasing we have
\[
\frac{t}{V\left(x,d\left(x,z\right)\right)\phi\left(d\left(x,z\right)\right)}
\le
\frac{t}{V\left(x,\frac{1}{2}d\left(x,y\right)\right)\phi\left(\frac{1}{2}d\left(x,y\right)\right)}.
\]
Moreover, using the volume doubling condition ($VD$) in \eqref{eq:VD-regularity} we know that 
\[
\frac{V\left(x,d\left(x,y\right)\right)}{V\left(x,\frac{1}{2}d\left(x,y\right)\right)}\leq C_{\mu}2^{d_{1}}\text{ and }\frac{\phi\left(d\left(x,y\right)\right)}{\phi\left(\frac{1}{2}d\left(x,y\right)\right)}\leq c_{2}2^{\alpha_{2}}.
\]
Plugging these into the right-hand side of the above inequality we obtain
\begin{align}\label{eq-V-mid1}
    \frac{t}{V\left(x,d\left(x,z\right)\right)\phi\left(d\left(x,z\right)\right)}\leq
\frac{Ct}{V\left(x,d\left(x,y\right)\right)\phi\left(d\left(x,y\right)\right)}
\end{align}
Next, since $\phi^{-1}$ is increasing, using \eqref{eq:Phi-regularity3} we have that
\[
\frac{V\left(x,\phi^{-1}\left(t\right)\right)}{V\left(x,\phi^{-1}\left(\frac{t}{2}\right)\right)}\leq C_{\mu}\left(\frac{\phi^{-1}\left(t\right)}{\phi^{-1}\left(\frac{t}{2}\right)}\right)^{d_{1}}\leq C_{\mu}\left(c_{2}2^{1/\alpha_{1}}\right)^{d_{1}},
\]
which yields 
\begin{align}\label{eq-V-mid2}
    \frac{1}{V\left(x,\phi^{-1}\left(\frac{t}{2}\right)\right)}
    \le
    \frac{C}{V\left(x,\phi^{-1}\left(t\right)\right)}
\end{align}
for some constant $C>0$. Combining \eqref{eq-V-mid1} and \eqref{eq-V-mid2} with \eqref{eq-pD-mid1} we conclude that for any $z\in A_{1}\left(x,y\right)$ and $t>0$,
\begin{align}
p_{D}\left(\frac{t}{2},x,z\right) 
 & \leq Cp\left(t,x,y\right),\label{eq:HK-Survival-bound-2}
\end{align}
for some $C>0$. Substituting this into the expression for $I$ in \eqref{eq-pD-I-II} we get
\begin{align}\label{eq-I-bd}
I & \leq Cp\left(t,x,y\right)\int_{A_{1}}p_{D}\left(\frac{t}{2},z,y\right)dz\leq Cp\left(t,x,y\right)\sup_{y\in D\setminus  \mathcal{N}}\mathbb{P}_{y}\left(\tau_{D}>\frac{t}{2}\right).
\end{align}
Note that the sets $A_1$ and $A_2$ defined in \eqref{eq-A1-A2} are symmetric in $x$ and $y$. Thus the  same argument applies to $II$, leading to the same bound as in \eqref{eq-I-bd}, which completes the proof. 
\end{proof}

We are now ready to prove the upper bound for survival probability. 
\begin{proof}[Proof of \eqref{eq:MainTheorem1-1}]
To improve the estimate obtained in Proposition \ref{prop:Prop-Main-Est-1}, we apply an iteration technique based on its proof.  
Let $R_{2}>0$ be a parameter to be chosen later. For any $x\in D$, we split the probability $\mathbb{P}_{x}\left(\tau_{D}>t\right)$ as before:
\begin{align*}
 & \mathbb{P}_{x}\left(\tau_{D}>t\right)\\
 =& \mathbb{P}_{x}\left(\tau_{D}>t,X_{t}\in B\left(x,R_{2}\right)\right)+\mathbb{P}_{x}\left(\tau_{D}>t,X_{t}\notin B\left(x,R_{2}\right)\right)=:I_{2}+II_{2}.
\end{align*}
By the same argument as in the Step 1 of the proof of  Proposition \ref{prop:Prop-Main-Est-1}, we obtain 
\begin{equation}\label{eq:Main-Improved-1}
I_{2}\leq C\left(1+\left(\frac{R_{2}}{\phi^{-1}\left(2\epsilon t\right)}\right)^{d_{1}/2}\right)e^{-\lambda\left(D\right)\left(1-\epsilon\right)t}.
\end{equation}
The improvement is made on the bound of $II_{2}$. By invoking the heat kernel bound obtained in Lemma \ref{lem:HK-Survival-bound} We get
\begin{align*}
II_{2} 
 & =\int_{D\cap B\left(x,R_{2}\right)^{c}}p_{D}\left(t,x,y\right)d\mu\left(y\right)  \leq C\sup_{z\in D}\mathbb{P}_{z}\left(\tau_{D}>\frac{t}{2}\right)\int_{B\left(x,R_{2}\right)^{c}}p\left(t,x,y\right)d\mu\left(y\right)
\end{align*}
for some constant $C>0$. Applying the bound \eqref{eq:Prop-Main-Est-1} from Proposition \ref{prop:Prop-Main-Est-1} and estimate \eqref{eq:Theorem-proof-II-bound}, we find
\begin{align}\label{eq:Main-Improved-2}
    II_{2}\leq C_{\epsilon}\exp\left({-\frac{1}{2}\frac{1}{1+\frac{d_{1}}{2\alpha_{1}}}\lambda\left(D\right)\left(1-\epsilon\right)t}\right)\frac{t}{\phi\left(R_{2}\right)}.
\end{align}
Combining \eqref{eq:Main-Improved-1} and \eqref{eq:Main-Improved-2} we obtain
\[
\mathbb{P}_{x}\left(\tau_{D}>t\right)\leq C\left(1+\left(\frac{R_{2}}{\phi^{-1}\left(2\epsilon t\right)}\right)^{d_{1}/2}\right)e^{-\lambda\left(D\right)\left(1-\epsilon\right)t}+C_{\epsilon}\exp\left({-\frac{1}{2}\frac{1}{1+\frac{d_{1}}{2\alpha_{1}}}\lambda\left(D\right)\left(1-\epsilon\right)t}\right)\frac{t}{\phi\left(R_{2}\right)},
\]
As in Proposition \ref{prop:Prop-Main-Est-1}, we choose $R_{2}=\phi^{-1}\left(2\epsilon te^{A_{2}t\lambda\left(D\right)}\right)$
for some $A_{2}>0$ to be decided later, Substituting this in we obatin 
\begin{equation}\label{eq:Main-Improved-3}
\mathbb{P}_{x}\left(\tau_{D}>t\right)\leq C_{\epsilon}\left(1+e^{\frac{A_{2}d_1}{2\alpha_1}\lambda(D)t}\right)e^{-\lambda\left(D\right)\left(1-\epsilon\right)t}+C_{\epsilon}\exp\left({-\frac{1}{2}\frac{1}{1+\frac{d_{1}}{2\alpha_{1}}}\lambda\left(D\right)\left(1-\epsilon\right)t}\right)\frac{1}{2\epsilon}e^{-A_{2}\lambda\left(D\right)t}.
\end{equation}
Choosing $A_{2}$ so that $\frac{A_{2}d_1}{2\alpha_1}-\left(1-\epsilon\right)=-\frac{\left(1-\epsilon\right)\delta}{1+\frac{d_1}{2\alpha_1}}-A_{2}$
which yields 
\[
A_{2}=\left(-\frac{1}{2}\frac{1}{\left(\frac{d_1}{2\alpha_1}+1\right)^{2}}+\frac{1}{\left(\frac{d_1}{2\alpha_1}+1\right)}\right)\left(1-\epsilon\right).
\]
Plugging into $\left(\ref{eq:Main-Improved-3}\right)$ gives
\begin{equation}\label{eq:proof-iteration-2}
\mathbb{P}_{x}\left(\tau_{D}>t\right)\leq C_{\epsilon}\exp\left(-\left(\frac{1}{\left(1+\frac{d_1}{2\alpha_1}\right)}+\frac{1}{2}\frac{\frac{d_1}{2\alpha_1}}{\left(1+\frac{d_1}{2\alpha_1}\right)^{2}}\right)\left(1-\epsilon\right)\lambda\left(D\right)t\right).
\end{equation}
Repeating this process $n$ times, we obtain  
\begin{align*}
&\mathbb{P}_{x}\left(\tau_{D}>t\right)\\
& \leq C_{\epsilon,n}\exp\left(-\left(\frac{1}{\left(1+\frac{d}{2\alpha_1}\right)}+\frac{\frac{d_1}{4\alpha_1}}{\left(1+\frac{d_1}{2\alpha_1}\right)^{2}}+\frac{\left(\frac{d_1}{4\alpha_1}\right)^{2}}{\left(1+\frac{d_1}{2\alpha_1}\right)^{3}}+\cdots+\frac{\left(\frac{d_1}{4\alpha_1}\right)^{n}}{\left(1+\frac{d_1}{2\alpha_1}\right)^{n+1}}\right)\left(1-\epsilon\right)\lambda\left(D\right)t\right)\\
 & =C_{\epsilon,n}\exp \left(-\frac{1-\left(\frac{\frac{d_1}{4\alpha_1}}{1+\frac{d_1}{2\alpha_1}}\right)^{n+1}}{1+\frac{d_1}{4\alpha_1}}\left(1-\epsilon\right)\lambda\left(D\right)t\right),
\end{align*}
We can pick $n$ big enough and $\epsilon>0$ small enough to get the desired upper bound in \eqref{eq:MainTheorem1-1}.
\end{proof}
This completes the proof of Theorem \ref{thm:MainTheorem1}.

\section{Proof of  Theorem \ref{Thm2} }\label{sec4}

The proof of \eqref{eq:current-best-asymp} follows directly from \eqref{eq:MainTheorem1-1}. The rest of this section is devoted to proving inequality $(\ref{eq:MainTheorem1-2})$. We apply a similar argument as in the proof of the local case in \cite{Mariano-Wang-2022} (see also \cite{Sznitman-1998}). We begin with the following Lemma. 
\begin{lemma}\label{lem:Asymp-Lower-2}
Let $\left(M,d,\left(\mathcal{E},\mathcal{F}\right), \mu\right)$ be a Dirichlet metric measure space as in Theorem \ref{Thm2}. Suppose that for a domain $D\subset M$ the Dirichlet heat kernel $p_{D}$ exists, is continuous, and strictly positive. Then for any $f\in\mathcal{F}\cap C_{0}\left(D\right)$ with $f\geq0$, and for almost every $x\in D,t>0$, there exists a $T>0$ such that
\[
\left(P_{t}^{D}f,f\right)\leq\frac{\left\Vert f\right\Vert _{\infty}^{2}}{\inf_{z\in\text{supp}f}p_{D}\left(x,z,T\right)}P_{t+T}^{D}1\left(x\right).
\]
\end{lemma}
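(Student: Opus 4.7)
My plan is a two-step sandwich that bounds $(P_t^D f, f)$ above by an integral of $P_t^D \mathbf{1}$ over the compact set $\mathrm{supp}\, f$, and then bounds that integral above by $P_{t+T}^D \mathbf{1}(x)$ divided by the advertised infimum. For the first step, I would use the trivial pointwise bound $f \leq \|f\|_\infty \mathbf{1}_{\mathrm{supp}\, f}$ twice — once outside the inner product and once inside the heat-kernel representation $P_t^D f(y) = \int_D p_D(t,y,z) f(z)\, d\mu(z)$ — to obtain
\[
(P_t^D f, f) = \int_D f(y)\, P_t^D f(y)\, d\mu(y) \leq \|f\|_\infty^2 \int_{\mathrm{supp}\, f} P_t^D \mathbf{1}(y)\, d\mu(y).
\]

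For the second step, I would apply the Chapman--Kolmogorov identity $p_D(t+T, x, y) = \int_D p_D(T, x, z)\, p_D(t, z, y)\, d\mu(z)$ and integrate in $y \in D$ to get $P_{t+T}^D \mathbf{1}(x) = \int_D p_D(T, x, z)\, P_t^D \mathbf{1}(z)\, d\mu(z)$. Restricting the domain of integration to $\mathrm{supp}\, f$ and pulling out the infimum then yields
\[
P_{t+T}^D \mathbf{1}(x) \geq \Bigl(\inf_{z \in \mathrm{supp}\, f} p_D(T, x, z)\Bigr) \int_{\mathrm{supp}\, f} P_t^D \mathbf{1}(z)\, d\mu(z).
\]
Dividing by the infimum and chaining with the first bound produces the stated inequality.

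The role of $T$ is purely to make that infimum strictly positive, which is where the hypotheses of the lemma enter: because $f \in C_0(D)$ has compact support in $D$ and $p_D(T, x, \cdot)$ is continuous and strictly positive on $D$ by assumption, the infimum is attained and positive for every $T > 0$, so the existence of $T$ is automatic. The almost-everywhere qualifier on $x$ reflects nothing more than the fact that the heat kernel representation of $P_t^D$ and the Chapman--Kolmogorov identity are valid off a properly exceptional set $\mathcal{N}$; the only genuine care in the proof is the bookkeeping to ensure all pointwise manipulations are performed on $D \setminus \mathcal{N}$. I do not anticipate a serious obstacle, as the argument is essentially a direct application of Chapman--Kolmogorov combined with the compactness of $\mathrm{supp}\, f$.
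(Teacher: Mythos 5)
Your argument is correct and is essentially the paper's own proof, just repackaged into two cleaner steps: where you first reduce $(P_t^D f, f)$ to $\|f\|_\infty^2 \int_{\mathrm{supp}\,f} P_t^D \mathbf{1}$ and then separately compare that integral to $P_{t+T}^D\mathbf{1}(x)$ via Chapman--Kolmogorov, the paper inserts the factor $p_D(x,z,T)/\inf_{z\in\mathrm{supp}\,f} p_D(x,z,T) \geq 1$ directly into the double integral and applies Chapman--Kolmogorov in one pass. The ingredients (bound $f$ by $\|f\|_\infty$ on its compact support, strict positivity and continuity of $p_D(T,x,\cdot)$ to get a positive infimum, Chapman--Kolmogorov) are identical, and your remark that the ``there exists $T$'' is really ``for every $T>0$'' is accurate.
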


\begin{proof}
By the compactness of  $\text{supp}f\subset D$ and the continuity and strict positivity of $p_{D}$, we have for all $T>0$ and $x,z \in D$ that
\[
0<\inf_{z\in\text{supp}f}p_{D}\left(x,z,T\right)\leq p_{D}\left(x,z,T\right).
\]
Therefore, for all $x\in D$ and $t>0$ we have that
\begin{align*}
\left(P_{t}^{D}f,f\right) = & \int_{D}\int_{D}f\left(y\right)f\left(z\right)p_{D}\left(z,y,t\right)d\mu\left(z\right)d\mu\left(y\right)\\
\leq & \left\Vert f\right\Vert _{\infty}\int_{D}\int_{D}\frac{p_{D}\left(x,z,T\right)}{\inf_{z\in\text{supp}f}p_{D}\left(x,z,T\right)}f\left(z\right)p_{D}\left(z,y,t\right)d\mu\left(z\right)d\mu\left(y\right)\\
\end{align*}
Using the Chapman-Kolmogorov identity we then obtain
\begin{align*}
\left(P_{t}^{D}f,f\right) \le & \frac{\left\Vert f\right\Vert _{\infty}^{2}}{\inf_{z\in\text{supp}f}p_{D}\left(x,z,T\right)}\int_{D}p_{D}\left(x,y,t+T\right)d\mu\left(y\right)\\
= & \frac{\left\Vert f\right\Vert _{\infty}^{2}}{\inf_{z\in\text{supp}f}p_{D}\left(x,z,T\right)}P_{t+T}^{D}1\left(x\right)
\end{align*}
which completes the proof.
\end{proof}
We then prove $\left(\ref{eq:MainTheorem1-2}\right)$ in the following
proposition. 

\begin{prop}\label{prop:Asymp-Lower-3}
Let $\left(M,d,\left(\mathcal{E},\mathcal{F}\right), \mu\right)$ be a Dirichlet metric measure space as in Theorem \ref{Thm2}. Suppose that for a domain $D\subset M$ the Dirichlet heat kernel $p_{D}$ exists, is continuous, and strictly positive. If $\lambda\left(D\right)>0$, then for all $x\in D$,
\begin{equation}\label{eq:Lower_bound_in_Prop}
\liminf_{t\to\infty}\frac{\log\mathbb{P}_{x}\left(\tau_{D}>t\right)}{t}\geq-\lambda\left(D\right).
\end{equation}

\end{prop}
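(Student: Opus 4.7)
The plan is to combine Lemma \ref{lem:Asymp-Lower-2} (which gives an upper bound on $(P_t^D f, f)$ in terms of the survival probability) with a spectral-theoretic lower bound on $(P_t^D f, f)$ obtained via Jensen's inequality, and then optimize over test functions to make the exponent approach $-\lambda(D)$.

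Fix $\epsilon>0$. By the variational characterization of $\lambda(D)$ as the infimum of $\mathcal{E}(u,u)/\|u\|_2^2$ over $\mathcal{F}(D)\setminus\{0\}$, together with the fact that $\mathcal{F}(D)$ is the $\mathcal{E}_1$-closure of $\mathcal{F}\cap C_0(D)$, I can choose a nonnegative $f\in\mathcal{F}\cap C_0(D)$ with $\|f\|_2=1$ and $\mathcal{E}(f,f)\le\lambda(D)+\epsilon$. The operator $-\mathcal{L}_D$ is nonnegative self-adjoint on $L^2(D)$ with spectrum contained in $[\lambda(D),\infty)$, so by the spectral theorem there is a probability measure $\mu_f$ on $[\lambda(D),\infty)$ with $\int\mu_f=\|f\|_2^2=1$ and $\int\lambda\,d\mu_f(\lambda)=\mathcal{E}(f,f)$, and
\[
(P_t^D f,f)=\int_{\lambda(D)}^\infty e^{-\lambda t}\,d\mu_f(\lambda).
\]
Jensen's inequality applied to the convex function $\lambda\mapsto e^{-\lambda t}$ then yields
\[
(P_t^D f,f)\ge \exp\!\left(-t\int\lambda\,d\mu_f(\lambda)\right)=e^{-t\mathcal{E}(f,f)}\ge e^{-t(\lambda(D)+\epsilon)}.
\]

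Next I would combine this with Lemma \ref{lem:Asymp-Lower-2}. Since $P_{t+T}^D 1(x)=\mathbb{P}_x(\tau_D>t+T)$ outside the properly exceptional set, rearranging that lemma gives
\[
\mathbb{P}_x(\tau_D>t+T)\ge \frac{\inf_{z\in\operatorname{supp}f} p_D(x,z,T)}{\|f\|_\infty^2}\,(P_t^D f,f)\ge \frac{\inf_{z\in\operatorname{supp}f} p_D(x,z,T)}{\|f\|_\infty^2}\,e^{-t(\lambda(D)+\epsilon)}.
\]
Here the hypotheses on $p_D$ (continuity and strict positivity on $D\times D$, combined with compactness of $\operatorname{supp} f\subset D$) ensure that the constant $\inf_{z\in\operatorname{supp}f} p_D(x,z,T)$ is strictly positive for each fixed $x\in D$ and $T>0$.

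Finally, taking logarithms, dividing by $t+T$, and letting $t\to\infty$ with $f$ and $T$ fixed, the positive prefactor contributes $0$ in the limit, and the ratio $t/(t+T)\to 1$. This gives
\[
\liminf_{t\to\infty}\frac{\log\mathbb{P}_x(\tau_D>t)}{t}\ge -(\lambda(D)+\epsilon),
\]
and sending $\epsilon\to 0$ yields \eqref{eq:Lower_bound_in_Prop}. The only delicate point is the density argument producing an $f\in \mathcal{F}\cap C_0(D)$ that simultaneously nearly attains the Rayleigh quotient and has compact support inside $D$ (needed to invoke Lemma \ref{lem:Asymp-Lower-2}); this is standard given the regularity of the Dirichlet form $(\mathcal{E},\mathcal{F})$, since $\mathcal{F}\cap C_0(D)$ is dense in $\mathcal{F}(D)$ in the $\mathcal{E}_1$-norm and hence the Rayleigh quotient can be approximated from above by such test functions.
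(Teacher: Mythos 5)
Your proof is correct and takes essentially the same route as the paper: pick a near-minimizer $f_\epsilon\in\mathcal{F}\cap C_0(D)$, use the spectral theorem plus Jensen to get $(P_t^D f_\epsilon, f_\epsilon)\ge e^{-t\mathcal{E}(f_\epsilon,f_\epsilon)}$, feed this into Lemma \ref{lem:Asymp-Lower-2}, take $\log$, divide by $t$, and send $\epsilon\to 0$.

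One small point worth tightening: you assert directly that one can choose the near-minimizer $f$ to be nonnegative, but density of $\mathcal{F}\cap C_0(D)$ in $\mathcal{F}(D)$ only gives you a near-minimizer of unspecified sign. The paper closes this gap explicitly: take an arbitrary near-minimizer $f_\epsilon$ and replace it with $|f_\epsilon|$, which still lies in $\mathcal{F}\cap C_0(D)$, has $\||f_\epsilon|\|_2=1$, and satisfies $\mathcal{E}(|f_\epsilon|,|f_\epsilon|)\le\mathcal{E}(f_\epsilon,f_\epsilon)$ by the Markov (normal contraction) property of the Dirichlet form. This is a one-line fix, but since Lemma \ref{lem:Asymp-Lower-2} genuinely requires $f\ge 0$, the justification should appear rather than be asserted.
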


\begin{proof}
Let $\epsilon>0$. By the definition of $\lambda\left(D\right)$,
we can find $f_{\epsilon}\in\mathcal{F}\cap C_{0}\left(D\right)$
such that $\left\Vert f_\epsilon\right\Vert _{2}=1$ and
\begin{equation}\label{eq:Lower_proof_Eq1}
\lambda\left(D\right)\leq\mathcal{E}\left(f_{\epsilon},f_{\epsilon}\right)\leq\lambda\left(D\right)+\epsilon,
\end{equation}
Therefore we have
\begin{align}\label{eq-exp-tE1}
e^{-t\left(\lambda\left(D\right)+\epsilon\right)} \leq e^{-t\mathcal{E}\left(f_{\epsilon},f_{\epsilon}\right)}  & \leq  e^{-t\mathcal{E}\left(\left|f_{\epsilon}\right|,\left|f_{\epsilon}\right|\right)} 
\end{align}
where the second inequality follows from the Markov property of the Dirichlet form $(\mathcal{E},\mathcal{F}_D)$. 

Moreover, for any $g\in\mathcal{F}\cap C_{0}\left(D\right)$
the Spectral Theorem and Jensen's inequality yield 
\begin{align}\label{eq:Lower_proof_Eq2}
e^{-t\mathcal{E}\left(g,g\right)}  =\exp\left(-t\int_{0}^{\infty}\lambda d\left\Vert E_{\lambda}g\right\Vert _{2}^{2}\right) \leq\int_{0}^{\infty}e^{-t\lambda}d\left\Vert E_{\lambda}g\right\Vert _{2}^{2} =\left(P_{t}g,g\right).
\end{align}
Applying \eqref{eq:Lower_proof_Eq2}  to $|f_\epsilon|$ and invoking Lemma \ref{lem:Asymp-Lower-2} we get
\begin{align}\label{eq-exp-tE2}
e^{-t\mathcal{E}\left(\left|f_{\epsilon}\right|,\left|f_{\epsilon}\right|\right)} 
\leq\left(P_{t}\left|f_{\epsilon}\right|,\left|f_{\epsilon}\right|\right)
\leq\frac{\left\Vert f_{\epsilon}\right\Vert _{\infty}^{2}}{\inf_{z\in\text{supp}\left|f_{\epsilon}\right|}p_{D}\left(x,z,T\right)}P_{t+T}^{D}1\left(x\right).
\end{align} 
Combining \eqref{eq-exp-tE1} and \eqref{eq-exp-tE2} we obtain for all $x\in D$, $t,T>0$ that
\[
P_{t+T}^{D}1\left(x\right)\geq\frac{\inf_{z\in\text{supp}\left|f_{\epsilon}\right|}p_{D}\left(x,z,T\right)}{\left\Vert f_{\epsilon}\right\Vert _{\infty}^{2}}e^{-t\left(\lambda\left(D\right)+\epsilon\right)}.
\]
The continuity of $p_{D}$ ensures that $P_{t+T}^{D}1\left(x\right)=\mathbb{P}_{x}\left(\tau_{D}>t+T\right)$ for all $x\in D$. Hence the above lower bound also applies to $\mathbb{P}_{x}\left(\tau_{D}>t+T\right)$. Since for any $T>0$, we have $\mathbb{P}_{x}\left(\tau_{D}>t\right)\geq\mathbb{P}_{x}\left(\tau_{D}>t+T\right)$
then for any $x\in D$, 
\[
\liminf_{t\to\infty}\frac{\log\mathbb{P}_{x}\left(\tau_{D}>t\right)}{t}\geq-\lambda\left(D\right)-\epsilon.
\]
The desired conclusion follows by letting $\epsilon\to0$.

\end{proof}

\section{Proof of Theorems \ref{Main:Thm:StableProc}-\ref{prop-kernel-bd} 
and Theorem \ref{thm:Horn-Domains-1} }\label{sec5}

\begin{proof}[Proof of Theorem \ref{Main:Thm:StableProc}]
The upper bounds follow by \eqref{eq:MainTheorem1-1} applied to $\alpha$-stable processes in $\mathbb{R}^d$.

The lower bound asymptotic in \eqref{eq:Main-Simplified-Asymp-1} follows by \eqref{eq:MainTheorem1-2} once we verify the conditions of that theorem. By \cite{Chen-Song-1997} it is well known that for any
domain $D\subset\mathbb{R}^{d}$ the Dirichlet heat kernel $p_{D}$
exists, is continuous, and strictly positive. Then by $(\ref{eq:MainTheorem1-2})$
in Theorem \ref{thm:MainTheorem1} we have that
\[
-\lambda\left(D\right)\leq\liminf_{t\to\infty}\frac{\log\mathbb{P}_{x}\left(\tau_{D}>t\right)}{t}.
\]
    
\end{proof}

\begin{proof}[Proof of  Corollary \ref{cor:Equivalences-intro-general}]
Suppose $\lambda\left(D\right)>0$. Then by $\left(\ref{eq:Main-Simplified-Upper-1}\right)$
from Theorem \ref{Main:Thm:StableProc}, there exists a constant $C>0$
such that
\begin{equation}\label{eq:ProofMainCor-1}
\mathbb{E}_{x}\left[\tau_{D}\right]=\int_{0}^{\infty}\mathbb{P}_{x}\left(\tau_{D}>t\right)dt\leq\frac{C}{\lambda\left(D\right)}<\infty,\
\end{equation}
for any $x\in D$. This implies that $\sup_{x\in D}\mathbb{E}_{x}\left[\tau_{D}\right]<\infty$.
Conversely, if $\sup_{x\in D}\mathbb{E}_{x}\left[\tau_{D}\right]<\infty$, it is well known that $\lambda\left(D\right)\geq\frac{1}{\sup_{x\in D}\mathbb{E}_{x}\left[\tau_{D}\right]}>0$.
(See discussion in \cite[Remark 3.4.]{Mariano-Wang-2022} for various references).
\end{proof}


\begin{proof}[Proof of  Theorem \ref{cor:Equivalences}]
\noindent\underline{\textit{Part (1):}}
The proof follows a similar argument as in \cite[Theorem 4.6]{Barrier-Bogdan-Grzywny-Ryznar-2015}. Suppose $D$ is $C^{1,1}$ at a scale $r>0$.
Let $x\in D$, and pick a point $q\in \partial D$ such that $\delta_{D}\left(x\right)=\left|x-q\right|$. We divide the proof into two cases.

\textbf{Case 1:} Suppose $\delta_{D}\left(x\right)\leq\frac{r}{2}$.
Define $D_{q}:=D\cap B\left(q,r\right)$. Clearly $\tau_{D_q}\le \tau_D$. We decompose  the mean exit time as 
\[
\mathbb{E}_{x}\left[\tau_{D}\right]=\mathbb{E}_{x}\left[\tau_{D}1_{(\tau_D=\tau_{D_q}) }\right]+
\mathbb{E}_{x}\left[\tau_{D}1_{(\tau_D>\tau_{D_q}) }\right].
\]
since $\{\tau_D>\tau_{D_q}\}=\{X_{\tau_{D_q}}\in D\}$, the strong Markov inequality yields
\begin{align*}
\mathbb{E}_{x}\left[\tau_{D}1_{(\tau_D>\tau_{D_q}) }\right]&=\mathbb{E}_{x}\left[\mathbb{E}_{X_{\tau_{D_{q}}}}\left[\tau_{D}\right]1_{\left(X_{\tau_{D_q}}\in D\right)}\right] \le 
\sup_{y\in D}\mathbb{E}_{y}\left[\tau_{D}\right]\mathbb{P}_{x}\left(X_{\tau_{D_q}}\in D\right)
\end{align*}
Moreover, since  $\{X_{\tau_{D_q}}\in D\}\subset\{X_{\tau_{D_q}}\in B(q,r)^c \}$ and
\[
\mathbb{E}_{x}\left[\tau_{D}1_{(\tau_D=\tau_{D_q}) }\right]=\mathbb{E}_{x}\left[\tau_{D_q}1_{(\tau_D=\tau_{D_q}) }\right]\le \mathbb{E}_{x}[\tau_{D_q}],
\]
we obtain that 
\[
\mathbb{E}_{x}\left[\tau_{D}\right]\le \mathbb{E}_{x}\left[\tau_{D_{q}}\right]+ \sup_{y\in D}\mathbb{E}_{y}\left[\tau_{D}\right]\mathbb{P}_{x}\left(\left|X_{\tau_{D_{q}}}-q\right|>r\right).
\]
By \cite[Corollary 2.8]{Barrier-Bogdan-Grzywny-Ryznar-2015} we know that $\mathbb{P}_{x}\left(\left|X_{\tau_{D_{q}}}-q\right|\geq r\right)\leq C_{d}\frac{\mathbb{E}_{x}\left[\tau_{D_{q}}\right]}{r^{\alpha}}$. Combining this with estimate \eqref{eq:ProofMainCor-1}, 
\[
\mathbb{E}_{x}\left[\tau_{D}\right]\leq\mathbb{E}_{x}\left[\tau_{D_{q}}\right]+\frac{C}{\lambda\left(D\right)}C_{d}\frac{\mathbb{E}_{x}\left[\tau_{D_{q}}\right]}{r^{\alpha}}.
\]
From \cite[Corollary 4.5]{Barrier-Bogdan-Grzywny-Ryznar-2015}, $\mathbb{E}_{x}\left[\tau_{D_{q}}\right]\leq C_rr^{\alpha/2}\delta_{D}\left(x\right)^{\alpha/2}$. Therefore
\[
\mathbb{E}_{x}\left[\tau_{D}\right]
\leq C_{D,r,d}\delta_{D}\left(x\right)^{\alpha/2},
\]
for some $ C_{D,r,d}>0$, as desired. 
 
 For the lower bound, 
 The lower bound follows from $\mathbb{E}_{x}\left[\tau_{D}\right]\geq\mathbb{E}_{x}\left[\tau_{B\left(x_{in},r\right)}\right]\geq C_{d,\alpha}\delta_{D}\left(x\right)^{\alpha/2}r^{\alpha/2}$ where we used \cite[Theorem 4.1]{Bogdan-Grzywny-Ryznar-2010}.

\textbf{Case 2:} Suppose $\delta_{D}(x)\geq\frac{r}{2}$. From $\left(\ref{eq:ProofMainCor-1}\right)$, 
\[
\mathbb{E}_{x}\left[\tau_{D}\right]\leq\frac{C}{\lambda\left(D\right)}\leq\frac{C}{\lambda\left(D\right)}\frac{\delta_{D}(x)^{\alpha/2}}{r^{\alpha/2}},
\]
which gives the desired upper bound.
For the lower bound, note that $B\left(x,\delta_{D}\right)\subset D$. Thus $\mathbb{E}_{x}\left[\tau_{D}\right]\geq\mathbb{E}_{x}\left[\tau_{B\left(x,\delta_{D}\left(x\right)\right)}\right]\geq C\delta_{D}\left(x\right)^{\alpha}\geq C\delta_{D}(x)^{\alpha/2}r^{\alpha/2}$. \\

\noindent\underline{\textit{Part (2):}}
Assume $R_D<\infty$. The implication in \eqref{eq:lambda-lower} was established in \cite[Remark 4.1]{Bianchi-Brasco-2022} for domains under a mild regularity condition. We adapt that argument to $C^{1,1}$ domains. 

We first recall the following equivalent definition of $\lambda\left(D\right)$
using the Gagliardo--Slobodecki\u{\i} seminorm
\[
\left[u\right]_{W^{\frac{\alpha}{2},2}\left(\mathbb{R}^{d}\right)}^{2}=\int\int_{\mathbb{R}^{d}\times\mathbb{R}^{d}}\frac{\left|u\left(x\right)-u\left(y\right)\right|^{2}}{\left|x-y\right|^{d+\alpha}}dxdy,
\]
by writing 
\begin{equation}
\lambda\left(D\right)=\inf_{u\in\tilde{W}_{0}^{\alpha/2,2}\left(D\right)\backslash\left\{ 0\right\} }\frac{\left[u\right]_{W^{\frac{\alpha}{2},2}\left(\mathbb{R}^{d}\right)}^{2}}{\left\Vert u\right\Vert _{L^{2}\left(\Omega\right)}^{2}},\label{eq:lambda-equiv1}
\end{equation}
where $\tilde{W}_{0}^{\alpha/2,2}\left(D\right)$ is the closure of
$C_{0}^{\infty}\left(D\right)$ in the fractional Sobolev space 
\[
W^{\frac{\alpha}{2},2}\left(\mathbb{R}^{d}\right):=\left\{ u\in L^{2}\left(D\right):\left[u\right]_{W^{\frac{\alpha}{2},2}\left(\mathbb{R}^{d}\right)}<\infty\right\} ,
\]
which is endowed with the norm 
\[
\left\Vert u\right\Vert _{W^{\frac{\alpha}{2},2}\left(\mathbb{R}^{d}\right)}=\left\Vert u\right\Vert _{L^{2}\left(\mathbb{R}^{d}\right)}+\left[u\right]_{W^{\frac{\alpha}{2},2}\left(\mathbb{R}^{d}\right)}.
\]
The infimum in $\left(\ref{eq:lambda-equiv1}\right)$ can be equivalently
taken in $C_{0}^{\infty}\left(D\right)$ as in \eqref{eq-lamdD}.

Now fix $x\in D$ and choose $q\in\partial D$ such that $\delta_{D}\left(x\right)=|x-q|$.
Since $D$ is uniformly $C^{1,1}$, there
exists $p_{out}\in D^{c}$ such that $B\left(p_{out},r\right)\subset D^{c}$
and is tangent to $\partial D$ at $q$. For any $y\in B\left(p_{out},r\right)$,
\begin{align*}
\left|x-y\right| & \leq\left|x-q\right|+\left|q-y\right|\leq\delta_{D}\left(x\right)+2r\leq R_{D}+2r.
\end{align*}
Hence for any $u\in C_{0}^{\infty}\left(D\right)\cap \nH$ we have 
\begin{align*}
&\int\int_{\mathbb{R}^{d}\times\mathbb{R}^{d}}\frac{\left|u\left(x\right)-u\left(y\right)\right|^{2}}{\left|x-y\right|^{d+\alpha}}dxdy \geq\int_{D}\left(\int_{B\left(p_{out},r\right)}\frac{\left|u\left(x\right)\right|^{2}}{\left|x-y\right|^{d+\alpha}}dy\right)dx\\
 & \quad\quad\geq\frac{1}{\left(R_{D}+2r\right)^{d+\alpha}}\int_{D}\left|B\left(p_{out},r\right)\right|\left|u\left(x\right)\right|^{2}dx =\frac{\omega_{d}r^{d}}{\left(R_{D}+2r\right)^{d+\alpha}}\int_{D}\left|u\left(x\right)\right|^{2}dx.
\end{align*}
This yields \eqref{eq:lambda-lower}, as desired.

Finally, we observe that  \eqref{eq:C11-equiv} follows directly from \eqref{eq:Mean-bound} and \eqref{eq:lambda-lower}.
\end{proof}

Next we prove the Dirichlet heat kernel upper bound claimed in  Proposition \ref{prop-kernel-bd}.
\begin{proof}[Proof of Proposition \ref{prop-kernel-bd}]

Suppose $D$ is any convex domain in $\mathbb{R}^{d}$. The proof
of the heat kernel upper bound $\left(\ref{eq:HK-up}\right)$ follows
similarly as in the proof given by Siudeja in \cite[Theorem 1.6]{Siudeja-2006}. The main difference is
that Lemma 3.4 in \cite{Siudeja-2006} is replaced by Lemma \ref{lem:HK-Survival-bound}
with the bound on $\mathbb{P}_{x}\left(\tau_{D}>t\right)$ given by
Theorem \ref{Main:Thm:StableProc}. We include some details here for completeness. 

When $D$ is a convex domain,  \cite[Proposition 3.3]{Siudeja-2006}  shows that 
\begin{equation}
p_{D}\left(t,x,y\right)\leq c\left(1\wedge\frac{\delta_{D}\left(y\right)^{\alpha/2}}{\sqrt{t}}\right)p\left(t,x,y\right).\label{eq:HK-proof-1}
\end{equation}
By Lemma \ref{lem:HK-Survival-bound} combined with the upper bound in Theorem \ref{Main:Thm:StableProc} shows that
there exists a constant $C_{\eta}>0$ such that 
\begin{align}
p_{D}\left(t,x,y\right) & \leq C\sup_{z\in D}\mathbb{P}_{x}\left(\tau_{D}>\frac{t}{2}\right)p\left(t,x,y\right)\leq C_{\eta}e^{-\frac{1}{2}\frac{\left(1-\eta\right)}{1+\frac{d}{4\alpha}}\lambda\left(D\right)t}p\left(t,x,y\right).\label{eq:HK-proof-2}
\end{align}
Using Chapman-Kolmogorov, and $\left(\ref{eq:HK-proof-1}\right)$
with $\left(\ref{eq:HK-proof-2}\right)$ we have 
\begin{align*}
p_{D}\left(t,x,y\right) & =\int_{D}p_{D}\left(\left(1-\eta\right)t,x,z\right)p_{D}\left(\eta t,z,y\right)dz\\
 & \leq\int_{D}C_{\eta}e^{-\frac{1}{2}\frac{\left(1-\eta\right)^{2}}{1+\frac{d}{4\alpha}}\lambda\left(D\right)t}p\left(\left(1-\eta\right)t,x,z\right)c\left(1\wedge\frac{\delta_{D}\left(y\right)^{\alpha/2}}{\sqrt{t}}\right)p\left(\eta t,z,y\right)dz\\
 & \leq C_{\eta}e^{-\frac{1}{2}\frac{\left(1-\eta\right)^{2}}{1+\frac{d}{4\alpha}}\lambda\left(D\right)t}\left(1\wedge\frac{\delta_{D}\left(y\right)^{\alpha/2}}{\sqrt{t}}\right)p\left(t,x,y\right).
\end{align*}
We can choose $\eta$ so small so that $\left(1-\eta\right)^{2}=1-\epsilon$
so that there exists a constant $C_{\epsilon}>0$ so that 
\begin{equation}
p_{D}\left(t,x,y\right)\leq C_{\epsilon}e^{-\frac{1}{2}\frac{\left(1-\epsilon\right)}{1+\frac{d}{4\alpha}}\lambda\left(D\right)t}\left(1\wedge\frac{\delta_{D}\left(y\right)^{\alpha/2}}{\sqrt{t}}\right)p\left(t,x,y\right)\label{eq:HK-proof-3}
\end{equation}
Using another application of Chapman-Kolmogorov, with symmetry and
the bound of $\left(\ref{eq:HK-proof-3}\right)$ for both terms gives
us the desired result. 

\end{proof}

\begin{proof}[Proof of Theorem \ref{thm:Horn-Domains-1}]

\medskip

We first show that 
$$\lambda\left(H\right)=\lambda_1\left(h\right).  $$
Let us fix notation. Consider any bounded connected open set $\omega\subset\mathbb{R}^{d-1}$, and let $\mathcal{L}_{\omega}$ denote the generator of the   $\left(d-1\right)-$dimensional
$\alpha-$ stable process killed upon exiting $\omega$. It is well-known that $\mathcal{L}_{\omega}$
has discrete spectrum. We denote its first eigenvalue by $\lambda_{1}\left(\omega\right)$.
Consider in $\mathbb{R}^{d}$ the semi-tube 
\begin{align*}
Q_{a}\left(\omega\right) & :=(a,\infty)\times\omega=\{x=(x_1,z): x_1> a, z\in \omega\},\quad a\in\R.
\end{align*}
In particular the full tube is denoted by
\begin{align*}
Q_{-\infty}\left(\omega\right) & :=(-\infty,\infty)\times\omega=\{x=(x_1,z): x_1\in\R, z\in \omega\}.
\end{align*}
Let $\mathcal{L}_{Q_{a}\left(\omega\right)}$ 
denote  the generator of the killed $\alpha-$ stable processes on $Q_{a}\left(\omega\right)$, $a\in\R\cup\{\infty\}$. 
By \cite[Corollary 1]{Bakharev-Nazarov-2023},  its spectrum 
coincides with the ray $\left[\lambda_{1}\left(\omega\right),\infty\right)$. Therefore, the bottom of the spectrum $\lambda\left(Q_{a}\left(\omega\right)\right)$ satisfies
\begin{align}\label{eq-lambda-horn}
   \lambda\left(Q_{a}\left(\omega\right)\right)
=\lambda_{1}\left(\omega\right). 
\end{align}
Now let $H$ be the horn-shaped domain as given in the theorem, and let $H(a)$, $a>0$ be a cross section of $H$ at $a$ (see Definition \ref{def-hornshape}). By the  monotonicity of the cross sections of $H$ we can easily see that $H$ is sandwiched by two semitubes:
\[
Q_a(H(a))\subset H \subset Q_0 (h) 
\]
where $h$ is given as in \eqref{eq-h}.
Using the monotonicity of spectrum we then obtain  that 
\begin{align}\label{eq-sandwich}
    \lambda\left(Q_0\left(h\right)\right)\leq\lambda\left(H\right)\leq\lambda\left(Q_{a}(H(a))\right).
\end{align}
for all $a>0$. Note that $Q_a(H(a))$ is a translation of $Q_0(H(a))$, which implies that $\lambda(Q_a(H(a)))=Q_0(H(a))$. Hence when $a\to\infty$, we have that 
\begin{align}\label{eq-sand-1}
\lim_{a\to\infty}\lambda(Q_a(H(a)))=\lim_{a\to\infty}\lambda(Q_0(H(a)))=\lim_{a\to\infty}\lambda_1(H(a))=\lambda_1(h)
\end{align}
where the last equality follows from the fact that $\cup_{a\ge0}H(a)=h$.
Using \eqref{eq-lambda-horn} we also know that 
\begin{align}\label{eq-sand-2}
    \lambda\left(Q_0\left(h\right)\right)=\lambda_1(h).
\end{align}
 Combining \eqref{eq-sandwich}, \eqref{eq-sand-1}, and \eqref{eq-sand-2} we then obtain that
\begin{align}\label{eq-lamb-equal}
  \lambda\left(H\right)=\lambda_1\left(h\right)  
\end{align}

We now prove \eqref{eq:Horn-sharp} by estimating the upper and lower bounds  of $\mathbb{P}_{x}\left(\tau_{H}>t\right)$ separately.  

\medskip
\noindent\underline{\textit{Upper bound estimate:}}
Recall that the given $d-$dimensional symmetric $\alpha$-stable
process $X_{t}$ can be written as a $d$-dimensional subordinated Brownian motion $X_{t}=W_{S_{t}}$, where $W_{t}$ is a Brownian motion in $\R^d$ running at twice the speed of a standard one, and $S_{t}$, $t>0$ is a $\frac{\alpha}{2}$-stable subordinator independent of $W_{t}$. We can write $X_{t}$ as $(X^1_t, Z_t)$, where $X^1_t$ and $Z_t$ are subordinated Brownian motions in $\R$ and in $\R^{d-1}$, respectively, both with the subordinator $S_{t}$. Let $\tau_h$ denote the first exit time of the process $Z_t$ from $h$. Then by the definition of $Q_{\R}(h)$ for any starting point $x=(x_1,z)\in H$, 
\[
\mathbb{P}_{(x_1,z)}\left(\tau_{Q_{-\infty}\left(h\right)}>t\right)=\mathbb{P}_{z}\left(\tau_{h}>t\right).
\]
Moreover since $H\subset Q_{-\infty}\left(h\right)$, we have for any $t>0$, 
\begin{align*}
\mathbb{P}_{\left(x_1,z\right)}\left(\tau_{H}>t\right) & \leq\mathbb{P}_{\left(x_1,z\right)}\left(\tau_{Q_{-\infty}\left(h\right)}>t\right).
\end{align*}
Combining the above yields 
\[
\limsup_{t\to\infty}\frac{\log\mathbb{P}_{\left(x_1,z\right)}\left(\tau_{H}>t\right)}{t}\leq\limsup_{t\to\infty}\frac{\log\mathbb{P}_{z}\left(\tau_{h}>t\right)}{t}=-\lambda_1\left(h\right).
\]
The last equality follows from classical asymptotic results for bounded domains in $\R^{d-1}$. Combining with \eqref{eq-lamb-equal} we obtain
\[
\limsup_{t\to\infty}\frac{\log\mathbb{P}_{x}\left(\tau_{H}>t\right)}{t}
\le -\lambda_1\left(h\right)=-\lambda\left(H\right).
\]
\noindent\underline{\textit{Lower bound estimate:}} 
By \cite{Chen-Song-1997} it is well known that for any
domain $D\subset\mathbb{R}^{d}$ the Dirichlet heat kernel $p_{D}$
exists, is continuous, and strictly positive. Then by $(\ref{eq:MainTheorem1-2})$
in Theorem \ref{thm:MainTheorem1} we have that
\[
-\lambda\left(H\right)\leq\liminf_{t\to\infty}\frac{\log\mathbb{P}_{x}\left(\tau_{H}>t\right)}{t}.
\]
Combining the upper and lower bounds completes the proof. 
\end{proof}

\section{Examples}\label{sec:ex}

The purpose of this section is to illustrate examples where our results apply. In particular, we give emphasis to unbounded domains with a positive bottom of the spectrum. These examples fall outside the scope of classical results, as they do not have discrete spectra, are not inner uniform, and are not necessarily convex.

\begin{example}[Horn-Shape Example]\label{Ex:HornShaped}
   Let $
H=\left\{ \left(x,y\right)\in\mathbb{R}\times \mathbb{R}^{d-1}\mid x>0,\left|y\right|<\frac{x}{x+1}\right\} $. This is a horn-shaped domain that is convex. See Figure~\ref{fig:horn-shape}. All the results in this paper apply to this domain. In particular, the sharp asymptotic behavior of the survival probability $\mathbb{P}_x(\tau_H > t)$ is given in Theorem~\ref{thm:Horn-Domains-1}:
\begin{equation}\label{eq:Horn-sharp2}
\lim_{t\to\infty}\frac{\log\mathbb{P}_{x}\left(\tau_{H}>t\right)}{t}=-\lambda\left(H\right)=-\lambda_{1}\left(B_{d-1}(0,1)\right).
\end{equation}
where $B_{d-1}(0,1)$ is a $(d-1)$ dimensional ball of radius 1. See \cite{Dyda-etall-2017} for some numerical estimates on $\lambda_{1}\left(B_{d-1}(0,1)\right)$. 
Note that in general, the cross sections of $H$ do not need to be convex for \eqref{eq:Horn-sharp2} to hold.

\end{example}

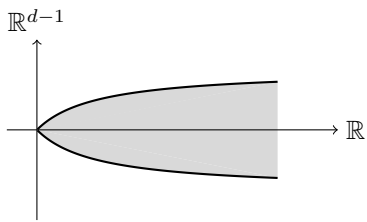
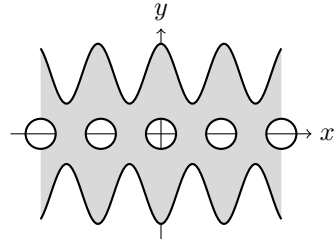
\begin{figure}[ht]
\centering

\begin{subfigure}[t]{0.45\linewidth} 
\centering
\begin{tikzpicture}[scale=0.8] 
  \fill[gray!30] (0,0) -- (4,4/5) -- (4,-4/5) -- cycle;
  \fill[gray!30, domain=0.01:4, variable=\x]
    (0.01,0) -- plot (\x, { \x / (\x + 1) }) --
    plot (\x, { -\x / (\x + 1) }) -- cycle;

  \draw[->] (-0.5,0) -- (5,0) node[right] {$\mathbb{R}$};
  \draw[->] (0,-1.5) -- (0,1.5) node[above] {$\mathbb{R}^{d-1}$};

  \draw[thick, domain=0.01:4, samples=100] plot (\x, { \x / (\x + 1) });
  \draw[thick, domain=0.01:4, samples=100] plot (\x, { -\x / (\x + 1) });
\end{tikzpicture}
\caption{Horn-shaped region from Example \ref{Ex:HornShaped}}
\label{fig:horn-shape}
\end{subfigure}%
\hfill 
\begin{subfigure}[t]{0.45\linewidth}
\centering
\begin{tikzpicture}[scale=0.4]
  \draw[->] (-5,0) -- (5,0) node[right] {$x$};
  \draw[->] (0,-3.5) -- (0,3.5) node[above] {$y$};

  \begin{scope}
    \clip (-5,-3.5) rectangle (5,3.5)
          (-4,0) circle (0.5)
          (-2,0) circle (0.5)
          (0,0) circle (0.5)
          (2,0) circle (0.5)
          (4,0) circle (0.5);
    \fill[gray!30]
      plot[domain=-4:4, samples=100] (\x, { 2 + cos(deg(3*\x)) }) --
      plot[domain=4:-4, samples=100] (\x, { -2 - cos(deg(3*\x)) }) -- cycle;
  \end{scope}

  \draw[thick, domain=-4:4, samples=100] plot (\x, { 2 + cos(deg(3*\x)) });
  \draw[thick, domain=-4:4, samples=100] plot (\x, { -2 - cos(deg(3*\x)) });

  \foreach \xx in {-4,-2,0,2,4} {
    \draw[thick] (\xx,0) circle (0.5);
  }
\end{tikzpicture}
\caption{Unbounded domain from Example \ref{Ex:General}}
\label{fig:ExampleDomain1}
\end{subfigure}

\caption{Illustrations of unbounded domains used in the examples.}
\label{fig:domains}
\end{figure}


\begin{example}[Unbounded domain with $\lambda (D)>0$]\label{Ex:General}
  Define 
  \[
U=\left\{ \left(x,y\right)\in\mathbb{R}^{2}\mid\left|y\right|<2+\cos\left(3x\right)\right\} \backslash\bigcup_{i\in\Z}B\left(2i,\frac{1}{2}\right)
\]
as illustrated in Figure \ref{fig:ExampleDomain1}. The domain $U$ is uniformly $C^{1,1}$ and it satisfies $\lambda (U)>0$. This is a consequence Corollary \ref{cor:Equivalences}-Part (1) since 
$\mathbb{E}_x[\tau_U]\le \mathbb{E}_x[\tau_S]<\infty$ where $S:=\left\{ \left(x,y\right)\in\mathbb{R}^{2}\mid\left|y\right|<3\right\}$ is an infinite strip that contains $U$.
However, $U$ does not have a discrete spectrum. This follows from a result of Kwa\'snicki \cite[Lemma 1]{Kwasnicki-2009},  which states that the heat semigroup operator $P_t^D=e^{t\mathcal{L}_D}$ is compact if and only if $\mathbb{E}_x[\tau_D]\to 0$ as $|x|\to\infty$. Since this is not the case for $U$, the spectrum is not discrete.  As a result, Theorem \ref{Main:Thm:StableProc} provides the best available description of the long-time behavior of the survival probability in current setting.  
\end{example}

\begin{example}[An unbounded Swiss cheese domain]
Let $\mathcal{Q}$ be an unbounded Swiss cheese type domain,  defined as $\mathbb{R}^{d}$ with an infinite array of evenly spaced balls removed . Figure \ref{fig:SwissCheese} illustrates the case when $d=2$.  The domain is of class $C^{1,1}$, but it is neither simply connected nor bounded in any direction. Moreover, by a result of Kwa\'snicki \cite[Lemma 1]{Kwasnicki-2009}, the domain $\mathcal{Q}$ does not admit a discrete spectrum. It is easy to see that its inradius satisfies $R_{\mathcal{Q}}<\infty$. By Theorem \ref{cor:Equivalences} we have that $\lambda \left(\mathcal{Q}\right)>0$, hence Theorem \ref{Main:Thm:StableProc} gives the best known estimate for the long term behavior of the survival probability in $\mathcal{Q}$. 

Here we give an explicit construction in $\mathbb{R}^2$, while a similar construction can be made in $\mathbb{R}^d$. Let
$
\mathring{Q}_{\epsilon}:=\left(\left[0,1\right]\times\left[0,1\right]\right)\backslash B\left(\left(\frac{1}{2},\frac{1}{2}\right),\epsilon\right)
$
with a choice of $\epsilon<\frac{1}{2}$. Define the set
\[
\mathcal{Q}=\bigcup_{\left(i,j\right)\in\mathbb{Z}\times\mathbb{Z}}\left(\mathring{Q}_{\epsilon}+\left(i,j\right)\right).
\]
Note that $\mathcal{Q}$ is the plane $\mathbb{R}^{2}$ with equally
spaced circular holes of radius $\epsilon$.  
Note that the inradius is $R_{\mathcal{Q}}=\frac{1}{\sqrt{2}}-\epsilon$.

\end{example}

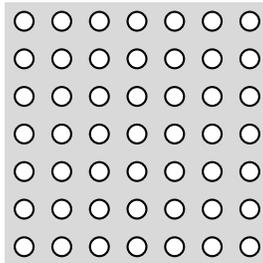
\begin{figure}[h]
    \centering
\begin{tikzpicture}[scale=0.5]
    \fill[gray!30] (-3.5,-3.5) rectangle (3.5,3.5);
    
    \foreach \x in {-3,-2,-1,0,1,2,3} {
        \foreach \y in {-3,-2,-1,0,1,2,3} {
            \fill[white] (\x,\y) circle (0.25);
            \draw[thick] (\x,\y) circle (0.25);
        }
    }
\end{tikzpicture}
   \caption{Infinite ``swiss cheese".
}
    \label{fig:SwissCheese}
\end{figure}

We also point to the work of \cite{bianchi2023spectrumsetscorestubes} for other examples of unbounded domains where the spectrum has been studied in the local case. 

\begin{acknowledgement}
We would like to thank Rodrigo Ba\~nuelos, Mathav Murugan and Hugo Panzo for helpful discussions. 
\end{acknowledgement}

\bibliographystyle{plain}	
\bibliography{MainRef}

\end{document}